\documentclass[a4paper,12pt]{article}
\usepackage{graphicx}

\usepackage[margin=1 in]{geometry}
\geometry{papersize={210mm,297mm}}

\usepackage[hidelinks]{hyperref}
\renewcommand{\url}[1]{{\href{#1}{#1}}}

\graphicspath{{NC18GTimages/}}

 \usepackage{times} 

\usepackage{mathtools}

\usepackage{amsfonts, amsmath, amsthm, amssymb} 

\title{A Central Limit Theorem and Exponential Correlation Decay for the Coulomb Chain}
\author{Henrik Ekström\footnote{This work was partially supported by the Wallenberg AI, Autonomous Systems and Software Program (WASP) funded by the Knut and Alice Wallenberg Foundation.}}

\date{}

\theoremstyle{plain}
\newtheorem{theorem}    {\bf Theorem}  [section]

\newtheorem{lemma}        [theorem]  {\bf Lemma}
\newtheorem{definition}   [theorem]  {\bf Definition}

\newtheoremstyle{nodotversion}{}{}{}{}{}{}{0pt}{}

\theoremstyle{nodotversion}
\newtheorem*{theorem*}  {\bf Theorem}

\theoremstyle{definition}

\numberwithin{equation}{section}

\newcommand{\posintegers}   {\mathbb{N}}

\newcommand{\real}          {\mathbb{R}}
\newcommand{\prob}          {\mathbb{P}}
\newcommand{\E}             {\mathbb{E}}

\newcommand{\T}             {\mathcal{T}}
\renewcommand{\O}             {\mathcal{O}}

\newcommand{\halfquad}      {\:\;}

\renewcommand{\vec}[1]  {\bar{#1}}

\renewcommand{\epsilon}{\varepsilon }

\setcounter{MaxMatrixCols}{20}

\begin{document}
\maketitle 
\smallskip
\noindent 
\small{Mathematical Center, Faculty of Science, University of Lund, Sölvegatan 18, 22100, Lund, Sweden.}
\smallskip
\begin{abstract}
    We study the Coulomb chain where particles are restricted to one dimension and experience three-dimensional Coulomb interactions with their nearest and next-to-nearest neighbours. The distances between consecutive particles are treated as random variables. It is shown that the correlation between clusters of consecutive variables decay exponentially with the number of variables separating them. This result is then used to prove a central limit theorem. 
\end{abstract}

\section{Introduction}
The Coulomb gas is a widely studied object in statistical physics, see e.g. the recent survey \cite{Djalil21-AspctOfCG}. We focus on a particular case of a Coulomb ensemble where particles with pairwise three-dimensional Coulomb interactions are confined to one dimension (the confinement could e.g. be achieved by a strong external force \cite{Dubin97-MinEnof1DCoulCh}).  Recall that the strength of the Coulomb force between two particles  is reciprocal to the distance between them.
The Coulomb potential can also be considered as a special case of the long-range Riesz potential \cite{Lewin22RieszKnandUnkn}. Models of this type exhibit rich behaviour involving phase transitions and they appear not only in the field of statistical physics but also in e.g. random matrix theory.

Consider $N$ particles of equal charge on a line.
Let $\vec P\in\real^{N}$ denote the consecutive positions of these particles, so that we have
\begin{equation}
    0\leq P_1 
    \leq \cdots 
    \leq P_{N-1} 
    \leq P_{N}.
\end{equation}
(One may fix the position of the left-most particle at zero.)
Let $K\in\posintegers$ be the number of closest neighbours (in either direction) that each particle interacts with. For $k\in\{1,\dots, K\}$, let $\beta_k\in\real^+$, be a parameter governing the strength of that level of interaction. The potential energy of the configuration $\vec{P}$ of particles is given by the sum of pairwise interactions between them:
\begin{equation}
    H(\vec{P}) = \sum_{k=1}^K\sum_{j=1}^{N-k} \frac{\beta_k}{P_{j+k}-P_{j}}.
\end{equation}

Asymptotically, the influence from the boundaries on the particles in the middle should decrease (at least for the case $K=2$
this was proved in \cite{Turova_22}).
To avoid boundary effect we shall henceforth set the particles on the circle, treating particle $N$ as a neighbour to particle $1$.
Setting also 
$\vec{Y}\in\real^N$ to be the vector of distances between the particles,
\begin{equation}\begin{aligned}
    Y_1 &\coloneqq P_1,\\
    Y_i &\coloneqq P_{i}-P_{i-1},\ i\in\{2,\dots,N\},
\end{aligned}\end{equation}
we define a circular form 
of the energy function in terms of $\vec Y$:
\begin{equation}
    H^{\circ}(\vec{Y})= \sum_{k=1}^K
    \sum_{i=1}^{N} 
        \frac{\beta_k}{Y_i+\dots+Y_{i+k-1}}.
\end{equation}
(For ease of notation, all indices here and henceforth are understood to be taken modulo $N$.)

We shall study the properties of the random 
configurations of particles given by the following 
Gibbs distribution density:
\begin{equation}
    f_{\vec Y}(\vec y) \coloneqq \frac{1}{Z_N} e^{-H^\circ(\vec y)}, \ \ \ \vec y\in[0,1]^N,
    \label{eq:YdistDefinition}
\end{equation}
where $Z_N$ is a normalising constant given by
\begin{equation}
    Z_N \coloneqq \int_{[0,1]^N}e^{-H^\circ(\vec y)}d\vec y.
\end{equation}
We assume here that the distance between consecutive particles is bounded, and therefore without loss of generality  we let $Y_i\in[0,1]$ in (\ref{eq:YdistDefinition}). 
Observe, however, that the length of the one dimensional segment where the particles are placed is not fixed, it is
\begin{equation}
    0\leq  \sum_{i=1}^N Y_i \leq N.
\end{equation}
Note that the distribution of $Y_i$ is a function of $N$, and to underline this dependence when needed, we shall write
\begin{equation}
    Y_i = Y_{i,N}.
\end{equation}

The model under consideration was initially introduced by Malyshev in 2014 \cite{Malyshev14-PhaseTI} and has been further studied since then \cite{MalyshevZamyatin15-CMultSysts, Turova16PhaseTin1DCG, Turova_22, Turova23-AntiferromagPropOfCC}. It differs from the more studied cases in that the particles do not interact with all other particles, but only with a fixed number of nearest neighbours. 
Following the initial work  \cite{Malyshev14-PhaseTI}, 
several phase transitions is proven to occur 
in this model in the presence of an increasing external force but only for the case $K=1$ \cite{Turova16PhaseTin1DCG}.
The case $K\geq 2$ is intrinsically different, as the inter-particle distances become dependent unlike in the case $K=1$.
When $K=2$, the exponential decay of correlations between inter-spacings is proven in \cite{Turova_22}, but a central limit theorem is only conjectured. 
Recently the covariance structure in the case $K=2$ has been described, as well as a central limit theorem proved but under the assumption that all the particles are confined to an interval of a fixed length \cite{Turova23-AntiferromagPropOfCC}.

The aim here is to prove the central limit theorem for particle inter-spacings when the length is not fixed. It should be noted that the related results \cite{Boursier23-OptLocLawsandCLT} on the central limit theorem for long-range Riesz gas ensembles concern the model with $K=N$, i.e., when all pairwise interactions are taken into account. Observe, however, that the case of exactly the Coulomb potential is beyond the scope of that work.

\section{Results}
We establish the central limit theorem for the entries of $\vec Y$ defined by Equations \eqref{eq:YdistDefinition} when $K=2$,
conjectured (without much details) in \cite{Turova_22}.
In what follows, we will write the parameters as $\beta_1=\beta$ and $\beta_2=\gamma$,
consistent with the notation in \cite{Turova_22}.
Although the most physically justified case is perhaps $\beta=\gamma$, we keep these parameters separate to underline their different roles. 
Our method is applicable for the more general case when particles interact with their $K\geq 2$ nearest neighbours as well, with added technical details. The most important feature of our model is the dependence of the random variables $Y_i$ expressed though the Gibbs form of the pair-wise interactions in the system. The correlation decay between any two subsets of consecutive particles established below is a crucial ingredient in the proof of the central limit theorem, as well as an informative result of the model  on its own.

\subsection{Correlation Decay in the Coulomb Gas Ensemble}
\label{subsection:CoulombResults}

For any non-empty disjoint subsets of consecutive indices $I_{},J_{}\subset\{1,\dots,N\}$, let 
\begin{equation}
    r(I_{},J_{})
    \label{eq:rDef}
\end{equation}
be the distance between $I_{}$ and $J_{}$, which is defined as the smallest number of the consecutive indices between these sets; in other words, it is the
cardinal of the smaller set of consecutive indices in 
\begin{equation}
    \{1,\dots,N\}\setminus (I_{}\cup J_{}).
\end{equation}
Here the indices $1$ and $N$ are considered as neighbours, so $I$, $J$ and the two (potentially empty) sets of indices `between' them are in general allowed to wrap around from $N$ to $1$. For any subset $J\subset\{1,\dots,N\}$, we use the notation
\begin{equation}
    \vec{Y}_{J} \coloneqq (Y_j)_{j\in J}.
\end{equation}

\begin{theorem} \label{thm:clusterDecayCoulomb}
    Let $\beta>0$ and $\gamma\geq 0$ be arbitrary parameters and let $I,J\subset\{0,\dots,N\}$ be disjoint sets of consecutive indices. Let $\vec{Y}\in[0,1]^N$ be a random vector with density given by
    \begin{equation}\begin{aligned}
        f_{\vec Y}(\vec y) &= 
            \frac{1}{Z_N}
            e^{-\sum_{i=1}^{N} \left(\frac{\beta}{2y_i}+\frac{\gamma}{y_i+y_{i+1}}\right)},\\
            Z_N &= \int_{[0,1]^N}e^{-\sum_{i=1}^{N} \left(\frac{\beta}{2y_i}+\frac{\gamma}{y_i+y_{i+1}}\right)}d\vec y.
        \label{eq:YdistNextNearest}
    \end{aligned}\end{equation}
    Then there exist positive constants $C=C(\beta,\gamma)$ and $\alpha=\alpha(\gamma)$ such that
    \begin{equation}\begin{aligned}
        \Big| f_{\vec{Y}_{I_{}}|\vec{Y}_{J_{}}=\vec{y}_{J_{}}}(\vec{y}_{I_{}}) - f_{\vec{Y}_{I_{}}}(\vec{y}_{I_{}}) \Big|
            \leq
        Ce^{-\alpha r(I_{},J_{})} f_{\vec{Y}_{I_{}}}(\vec{y}_{I_{}}),
        \label{eq:clusterDecayBound}
    \end{aligned}\end{equation}
    for any $\vec{y}_{I_{}} \in[0,1]^{|I_{}|}$ and $\vec{y}_{J_{}} \in[0,1]^{|J_{}|}$. Furthermore, we have that
    \begin{equation}
        \alpha(\gamma)\overset{\gamma \to0}{\longrightarrow} \infty.
        \label{eq:alphaToInfty}
    \end{equation}
\end{theorem}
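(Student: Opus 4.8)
The plan is to exploit the one-dimensional, finite-range structure of the interaction to set up a transfer-operator argument. Because each particle interacts only with its nearest and next-to-nearest neighbours, the density $f_{\vec Y}$ factorises into a product of local weights, each depending on at most two consecutive variables. Concretely, writing
\begin{equation}
    w(y_i,y_{i+1}) \coloneqq e^{-\frac{\beta}{4y_i}-\frac{\beta}{4y_{i+1}}-\frac{\gamma}{y_i+y_{i+1}}},
    \nonumber
\end{equation}
the joint density is proportional to $\prod_{i=1}^N w(y_i,y_{i+1})$, a one-dimensional chain with a kernel acting on $L^2([0,1])$ (or a suitable weighted space). I would first record this factorisation and define the integral operator $\mathcal{K}$ with kernel $w$, noting that $\mathcal{K}$ is a strictly positive, compact (Hilbert--Schmidt) operator, so the Perron--Frobenius / Jentzsch theorem applies: the top eigenvalue $\lambda_0>0$ is simple with a strictly positive eigenfunction $\phi_0$, and there is a spectral gap, i.e. $|\lambda_1|<\lambda_0$.

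The second step is to translate the cluster-correlation statement \eqref{eq:clusterDecayBound} into a statement about iterates of $\mathcal{K}$. Fix the disjoint arcs $I$ and $J$; let $m$ and $m'$ be the lengths of the two complementary arcs, so $r(I,J)=\min(m,m')$. Conditioning on $\vec{Y}_J=\vec{y}_J$, the conditional density of $\vec{Y}_I$ and the unconditional density of $\vec{Y}_I$ can both be written by integrating the product of local weights over the complementary variables; the difference between the two boils down to comparing $\mathcal{K}^{r}$ (applied with boundary data coming from $J$) against its rank-one limit $\lambda_0^{r}\,\phi_0\otimes\psi_0$, where $\psi_0$ is the left eigenfunction. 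The spectral gap yields
\begin{equation}
    \Bigl\| \lambda_0^{-r}\mathcal{K}^{r} - \phi_0\otimes\psi_0 \Bigr\|
        \leq C\Bigl(\tfrac{|\lambda_1|}{\lambda_0}\Bigr)^{r},
    \nonumber
\end{equation}
and setting $\alpha \coloneqq \log(\lambda_0/|\lambda_1|)>0$ gives the exponential factor $e^{-\alpha r(I,J)}$ in front of $f_{\vec{Y}_I}(\vec y_I)$. The fact that the bound is multiplicative in $f_{\vec{Y}_I}$ (rather than additive) comes from factoring $f_{\vec{Y}_I}$ out of both terms — the remaining ratio is exactly what the spectral-gap estimate controls uniformly in $\vec y_I$ and $\vec y_J$.

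The third step is the limit \eqref{eq:alphaToInfty}. When $\gamma=0$ the chain decouples completely: $w(y_i,y_{i+1})=e^{-\beta/(4y_i)}e^{-\beta/(4y_{i+1})}$ is a rank-one kernel, so $\lambda_1=0$ and $\alpha=\infty$. For small $\gamma>0$ one perturbs: $\mathcal{K}=\mathcal{K}_0+\gamma\,\mathcal{R}_\gamma$ with $\mathcal{K}_0$ rank one and $\|\mathcal{R}_\gamma\|$ bounded, so by standard perturbation theory $|\lambda_1|=O(\gamma)$ while $\lambda_0$ stays bounded away from $0$; hence $\alpha(\gamma)=\log(\lambda_0/|\lambda_1|)\to\infty$ as $\gamma\to0$, and one checks that $\alpha$ depends on $\gamma$ only (not on $\beta$, after absorbing $\beta$ into the reference measure) as claimed.

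The main obstacle I anticipate is not the spectral gap itself but the singularity of the weight $w$ at the boundary: the factor $e^{-\beta/(4y_i)}$ vanishes (with all derivatives) as $y_i\to0$, so $\mathcal{K}$ is well-behaved there, but one must be careful that the eigenfunction $\phi_0$ and its reciprocal are bounded on $[0,1]$ so that dividing by $f_{\vec{Y}_I}$ is legitimate and all constants are uniform; establishing two-sided bounds $c\leq\phi_0(y)\leq C$ (which follow from Harnack-type estimates for positive kernels, using that $w$ is bounded below on compact subsets of $(0,1]^2$ and that the boundary point $0$ is, in effect, unreachable) is the technical heart of the argument. A secondary point requiring care is the wrap-around: since both complementary arcs matter, the genuinely relevant quantity is $\min(m,m')$, and one gets the decay by applying the transfer-operator estimate along whichever arc is shorter, which is exactly $r(I,J)$.
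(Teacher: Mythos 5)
Your transfer-kernel setup is in fact the same object the paper works with (its $\T^r(x,y)$ is exactly the $r$-fold iterate of your kernel $w$), but the step where you diverge from the paper --- replacing explicit kernel comparisons by a Perron--Frobenius/spectral-gap argument in $L^2$ --- is where the proposal has a genuine gap. The bound \eqref{eq:clusterDecayBound} is a \emph{multiplicative} (relative-error) estimate that must hold uniformly for all $\vec y_I\in[0,1]^{|I|}$ and all conditioning values $\vec y_J\in[0,1]^{|J|}$, including points arbitrarily close to $0$ where the kernel degenerates. What this requires is a pointwise relative-error asymptotic of the form $\mathcal{K}^r(x,y)=\lambda_0^r\,\phi_0(x)\psi_0(y)\big(1+\O(e^{-\alpha r})\big)$ uniformly in $x,y\in(0,1]$; an operator-norm estimate $\|\lambda_0^{-r}\mathcal{K}^r-\phi_0\otimes\psi_0\|\le C\rho^r$ does not deliver this. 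The claim you invoke to bridge the two --- two-sided bounds $c\le\phi_0\le C$ on $[0,1]$ --- is false: since $w(x,y)\le e^{-\beta/(4x)}$, the eigenvalue equation gives $\phi_0(x)\le\lambda_0^{-1}e^{-\beta/(4x)}\|\phi_0\|_{L^1}$, so $\phi_0$ vanishes at $0$ faster than any power, and the remark that the boundary is ``in effect unreachable'' is not available because the theorem is a statement about the densities at every point. The correct statement is comparability with the boundary weight, $\phi_0(x)\asymp e^{-\beta/(4x)}$, and the uniform contraction in ratio is the delicate part: Birkhoff's projective-metric contraction does not apply directly, because the cross-ratio $w(x,y)w(x',y')/\big(w(x,y')w(x',y)\big)=\exp\big(-\gamma\big[\tfrac{1}{x+y}+\tfrac{1}{x'+y'}-\tfrac{1}{x+y'}-\tfrac{1}{x'+y}\big]\big)$ is unbounded as arguments approach $0$, so one must first show that (two-step) images of positive functions lie in a cone of finite projective diameter. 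That uniformization is exactly what the paper achieves by hand with the elementary bounds $e^{-\frac{\beta}{2x}}e^{-\frac{\beta}{2y}-\frac{\gamma}{y}}\le Q(x,y)\le e^{-\frac{\beta}{2x}}e^{-\frac{\beta}{2y}}$, and until it is supplied your argument is incomplete at its central point.

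Two further issues. First, the theorem asks for $\alpha=\alpha(\gamma)$ depending on $\gamma$ only; in your framework both $\lambda_0$ and $\lambda_1$ depend on $\beta$, and ``absorbing $\beta$ into the reference measure'' merely conjugates the operator without removing that dependence, so the $\gamma$-only dependence is asserted but not obtained. (The paper gets it because the decay constant $\epsilon$, hence $\alpha=-\ln\epsilon$, comes from an extension of Lemma 3.1 of Turova (2022), which bounds differences of the type $\T^r(z_1,z_2)\T^r(z_3,z_4)-\T^r(z_1,z_4)\T^r(z_3,z_2)$ by $\epsilon^r$ with $\epsilon=\epsilon(\gamma)$; this lemma also yields \eqref{eq:alphaToInfty} directly.) Second, in the circular geometry both complementary arcs enter the conditional density, so it is not quite a matter of ``applying the estimate along the shorter arc'': one must check, as the paper does, that the contribution of the longer arc cancels, leaving decay in $r(I,J)=\min(r,r')$. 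Your $\gamma\to0$ perturbation argument ($\mathcal{K}_0$ rank one, $|\lambda_1|=\O(\gamma)$ for fixed $\beta$) is sound, and with a cone-contraction input in place of the $L^2$ gap your strategy could be completed, but as written the uniform multiplicative control --- the heart of the theorem --- is missing.
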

This theorem extends the result of Theorem 2.1 in \cite{Turova_22}, which only treats the case when $|I|=|J|=1$, albeit also in the non-circular version of the model. Note that if $\gamma=0$ the density in Equation \eqref{eq:YdistNextNearest} factorises and $Y_i$ are independent, which agrees with the behaviour of $\alpha$ in Equation \eqref{eq:alphaToInfty}.

\subsection{The Central Limit Theorem}
\label{subsection:CLTResults}

Our main result here is the following central limit theorem. 

\begin{theorem}
    Let $\beta>0$ and $\gamma\geq 0$ be arbitrary parameters and $\vec{Y}\in[0,1]^N$ be a random vector with density given by Equation \eqref{eq:YdistNextNearest}.
    Denote
    \begin{equation}
        \sigma_N^2 \coloneqq \frac{1}{N}\text{Var}\bigg(\sum_{i=1}^N \big(Y_i-\E(Y_i)\big)\bigg),
        \label{eq:sigma}
    \end{equation}
    and set
    \begin{equation}
        \zeta_N\coloneqq \frac{1}{\sqrt{\smash[b]{N\sigma^2_N}}}\sum_{i=1}^N \big(Y_i-\E(Y_i)\big).
        \label{thmeq:sumAsZeta}
    \end{equation}
    Then for any $\epsilon\in(0,1/4)$, 
    we have
    \begin{equation}\begin{aligned}
        \sup_{z}\left| \prob(\zeta_N \leq z)-\prob(Z \leq z)\right| 
            =
        \O(N^{-\frac{1}{4}+\epsilon}),
        \label{thmeq:distributions}
    \end{aligned}\end{equation}
    where $Z$ is a standard normal random variable.
    \label{thm:CLT}
\end{theorem}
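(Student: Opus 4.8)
The plan is to deduce the central limit theorem from the exponential correlation decay established in Theorem~\ref{thm:clusterDecayCoulomb} by applying a quantitative CLT for weakly dependent arrays, in the spirit of a Stein-method or blocking argument. Write $X_i \coloneqq Y_i - \E(Y_i)$, so that $\zeta_N = (N\sigma_N^2)^{-1/2}\sum_{i=1}^N X_i$ with $\E(X_i)=0$ and $|X_i|\leq 1$. The first step is to translate the conditional-density bound \eqref{eq:clusterDecayBound} into a bound on the $\phi$- or $\alpha$-mixing coefficients of the stationary circular sequence $(X_i)$: for index blocks $I,J$ at distance $r$, the total-variation distance between the joint law of $\vec Y_I, \vec Y_J$ and the product of marginals is $O(e^{-\alpha r})$, uniformly in $N$, which immediately gives exponentially decaying mixing coefficients and hence exponentially decaying covariances $|\mathrm{Cov}(X_i, X_j)| \leq C e^{-\alpha |i-j|}$ (distance on the circle). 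This also shows that the series defining the limiting variance converges and that $\sigma_N^2 \to \sigma^2 \coloneqq \sum_{k\in\zahl} \lim_N \mathrm{Cov}(X_0, X_k)$, provided $\sigma^2>0$.

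The second step handles the non-degeneracy $\sigma_N^2 \geq c > 0$, which is needed to make the normalisation in \eqref{thmeq:sumAsZeta} legitimate and the error in \eqref{thmeq:distributions} meaningful. One expects to argue that the conditional variance of $Y_i$ given all other coordinates is bounded below uniformly (the density \eqref{eq:YdistNextNearest} is smooth and bounded away from degeneracy on compact subsets of $(0,1]^N$, and the contribution of $y_i$ near $0$ is suppressed by the $\beta/(2y_i)$ term), and to combine this lower bound with the mixing property to get $\sigma_N^2 \gtrsim 1$; here one may need the decomposition of $\sigma_N^2$ into a sum over lags and the fact that the lag-$0$ term dominates once correlations decay fast enough.

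The third and central step is the quantitative normal approximation. I would partition $\{1,\dots,N\}$ into alternating "big" blocks of length $L = L(N)$ and "small" separating blocks of length $\ell = \ell(N)$ with $\ell \gg \log N$ (so that, by Theorem~\ref{thm:clusterDecayCoulomb}, the big-block sums are approximately independent up to a total error $O(N e^{-\alpha \ell})$, which is negligible if $\ell$ is a large enough multiple of $\log N$). Replacing the big-block sums by genuinely independent copies costs $O(Ne^{-\alpha\ell})$ in Kolmogorov distance; the small blocks contribute a sum whose variance is $O((N/L)\cdot \ell^2) $ in the worst case but really $O((N/L)\ell)$ by the covariance bound, so after normalisation by $\sqrt{N}$ their effect is $O(\sqrt{\ell/L})$ in $L^2$ and hence (via a Chebyshev/smoothing step) in Kolmogorov distance; and the Berry–Esseen theorem applied to the $\sim N/L$ independent big-block sums, each of which has variance $\Theta(L)$ and third absolute moment $O(L^{3/2})$ (again by mixing, each block sum is itself "close to Gaussian" with bounded normalised moments), gives an error $O(\sqrt{L/N})$. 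Optimising $L \sim \sqrt N$ and $\ell \sim C\log N$ balances the $O(\sqrt{L/N})$ and $O(\sqrt{\ell/L})$ terms at $O(N^{-1/4}\cdot\mathrm{polylog})$, and absorbing the logarithmic factor into $N^{\epsilon}$ for any $\epsilon\in(0,1/4)$ yields \eqref{thmeq:distributions}.

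The main obstacle I anticipate is the uniform-in-$N$ control of the block-sum moments and their Gaussian approximation: establishing that each big-block sum $S_m = \sum_{i\in \text{block }m} X_i$ satisfies $\mathrm{Var}(S_m) = \Theta(L)$ and $\E|S_m|^3 = O(L^{3/2})$ with constants independent of $N$ and of $m$ requires propagating the mixing bound carefully through sums of dependent bounded variables (a Rosenthal-type inequality under exponential mixing), and the lower bound $\mathrm{Var}(S_m)\gtrsim L$ again leans on the non-degeneracy step; a secondary difficulty is making the "replace dependent big blocks by independent ones" step quantitative in Kolmogorov (not just Wasserstein) distance, for which a smoothing inequality trading an $O(N^{-1/4})$-scale mollification against the density bound of the approximating Gaussian is the cleanest route. \HEqed
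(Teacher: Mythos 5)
Your overall skeleton is the same as the paper's: centre the variables, partition the circle into alternating big and small blocks, use Theorem~\ref{thm:clusterDecayCoulomb} to decouple the big blocks, apply a Berry--Esseen bound to the independent block sums, and pass to Kolmogorov distance through an Esseen-type smoothing inequality (the paper follows Schmuland--Sun, uses Petrov's Theorem~2 for the smoothing, and takes the non-degeneracy of $\sigma_N^2$ from Corollary~2.4 of \cite{Turova_22} rather than from a conditional-variance argument, but these are cosmetic differences). The genuine problem is in your quantitative bookkeeping for the third step, and it is not a technicality: with your parameter choices the argument as described does not reach the rate $N^{-1/4+\epsilon}$.

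Concretely, you control the small-block contribution $D=(N\sigma_N^2)^{-1/2}\sum_{i}\sum_{j\in W_i}X_j$ only in $L^2$, by $\sqrt{\ell/L}$, and then assert that ``via a Chebyshev/smoothing step'' this transfers to Kolmogorov distance at the same order. It does not. If you run $D$ through the smoothing inequality, an $L^1$-error $\eta$ enters as $T\eta$ against the smoothing cost $1/T$, so optimising $T$ gives only $\sqrt{\eta}\approx(\ell/L)^{1/4}\approx N^{-1/8}$ for $L\sim\sqrt N$; if instead you use Chebyshev plus Gaussian anti-concentration you get $\eta^{2/3}\approx N^{-1/6}$. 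Either way the small-block step, not the Berry--Esseen step, becomes the bottleneck and the claimed balance at $N^{-1/4}\cdot\mathrm{polylog}$ fails. The same smoothing term $1/T$ also caps the rate coming from your ``replace big blocks by independent copies'' step, which likewise must be run inside the Esseen integral. The paper avoids this precisely by making the blocks very asymmetric, $p=\lfloor N^{1-2\epsilon}\rfloor$ and $q=\lfloor N^{\epsilon}\rfloor$ with cutoff $T=N^{1/4-\epsilon}$, so that $T\sqrt{kq/N}$ and $1/T$ are both of order $N^{-1/4+\epsilon}$ (this is exactly the constraint $\delta_1-\delta_2\geq 1-4\epsilon$ in \eqref{CLTeq:exponentCond}); and since a plain Kolmogorov Berry--Esseen over only $k\sim N^{2\epsilon}$ blocks would then be far too weak, it instead uses the characteristic-function form of Berry--Esseen inside the integral together with moment bounds for the block sums (its Lemmas~\ref{lem:XitoXi} and \ref{lem:BerryEsseen}, the former being the renormalisation $\sqrt{N\sigma_N^2}$ versus $\sqrt{k\,\mathrm{Var}(\xi_1)}$ that you pass over when you write $\mathrm{Var}(S_m)=\Theta(L)$). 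To rescue your own choice $L\sim\sqrt N$, $\ell\sim C\log N$ you would need much stronger concentration for $D$ than Chebyshev --- e.g.\ sub-Gaussian or high-moment (Rosenthal-type) bounds for sums under the exponential decay of Theorem~\ref{thm:clusterDecayCoulomb} --- which is exactly the ingredient you flag as a difficulty but do not supply; as written, the proposal establishes at best a rate around $N^{-1/6}$, not \eqref{thmeq:distributions}.
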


The proof of Theorem \ref{thm:CLT} is 
much inspired by the work of Schmuland and Sun \cite{SS_2004_CLTforRFldWExpDecay} (see also the discussion and the reference therein on similar mixing conditions, particularly in the context of statistical physics). Note, however, that the results  of \cite{SS_2004_CLTforRFldWExpDecay} are not directly applicable for our setting. In particular, 
the random variables in \cite{SS_2004_CLTforRFldWExpDecay} are 
placed on (or indexed by) an infinite lattice. Moreover, the conditions which are assumed in
\cite{SS_2004_CLTforRFldWExpDecay},  remain to be established for our model.

Taking advantage of some specific features (e.g. the smooth density), 
we are able to establish a better rate of convergence for our model, which is arbitrarily close to $\O(N^{-1/4})$, while the rate provided by \cite{SS_2004_CLTforRFldWExpDecay}
 is $\O(N^{-1/9})$.
We conjecture that Equation (\ref{thmeq:distributions}) provides the best possible rate for the central limit theorem for this model. At least we show that the rate can not be improved further with the methods used here.
The methods should also be useful for studying more general cases of Gibbs distributions.

\section{Proofs}
\subsection{Proof of Theorem \ref{thm:clusterDecayCoulomb}}
\label{subsection:ClusterDecay}
Let $I$ and $J$ be disjoint sets of consecutive indices,
\begin{equation}
    I,J\subset\{1,\dots,N\}.
\end{equation}
Since the model is circular and $Y_i$ identically distributed, we shall without loss of generality assume that 
\begin{equation}\begin{aligned}
    I &= \{1,\dots,|I|\},
    \label{eq:IstartsAtOne}
\end{aligned}\end{equation}
and also that $1\leq|I|\leq |J|$.

Theorem 2.1 in \cite{Turova_22} proves a similar result to our theorem. There, the distribution is not circular and, using our notation, $|I_{}|=|J_{}|=1$. The proof of our theorem follows the proof in \cite{Turova_22} but it needs some adjustments to account for the clusters of variables. However, the case $|I|=|J|=1$ can be omitted below as it directly follows from \cite{Turova_22}. 

We begin by defining some useful functions and derive bounds for them. By defining the function
\begin{equation}\begin{aligned}
    Q(x,y) &\coloneqq e^{-\frac{\beta}{2x}-\frac{\beta}{2y}-\frac{\gamma}{x+y}},
    \label{eq:Qdef}
\end{aligned}\end{equation}
we may rewrite the density in Equation \eqref{eq:YdistNextNearest} as
\begin{equation}\begin{aligned}
    f_{\vec Y}(\vec y) {}=&\ \frac{
        1
    }{
        Z_N
    }
        \prod_{i=1}^{N}Q(y_i,y_{i+1}),\\
    Z_N {}\coloneqq& \int_{[0,1]^N}
        \prod_{i=1}^{N}Q(y_i,y_{i+1}) d\vec y.
        \label{eq:YdistNextNearestWithQ}
\end{aligned}\end{equation}
Further, for $r\in\{0,1,\dots,N-1\}$ and $x,y\in[0,1]$, we define the function 
\begin{equation}\begin{aligned}
    \T^r(x,y) &\coloneqq 
        \int_{[0,1]^r} Q(x,u_1)Q(u_1,u_2)\cdots Q(u_{r-1},u_r)Q(u_r,y) du_1\cdots du_r,
\end{aligned}\end{equation}
and note that
\begin{equation}\begin{aligned}
    \T^0(x,y) = Q(x,y).
\end{aligned}\end{equation}
We now derive approximations of this function $\T^r(x,y)$ for use in what follows. By using the upper bound
\begin{equation}\begin{aligned}
    Q(x,y)
        &\leq e^{-\frac{\beta}{2x}}e^{-\frac{\beta}{2y}},
\end{aligned}\end{equation}
we may, for any $r\in\posintegers\setminus\{0,1\}$, write 
\begin{equation}\begin{aligned}
    \T^r(x,y) &= \int_{[0,1]}Q(x,x')\T^{r-1}(x',y)dx'\\
    &\leq e^{-\frac{\beta}{2x}}
        \int_{[0,1]}
            e^{-\frac{\beta}{2x'}}
            \T^{r-1}(x',y)
        dx'\\
    &\leq e^{-\frac{\beta}{2x}}
        \left(\int_{[0,1]^2}
            e^{-\frac{\beta}{2x'}}
            \T^{r-2}(x',y')
            e^{-\frac{\beta}{2y'}}
        dx'dy'\right)
        e^{-\frac{\beta}{2y}},
    \label{eq:TapproximationsRgeqTwo}
\end{aligned}\end{equation}
and by using the lower bound
\begin{equation}\begin{aligned}
    Q(x,y) &\geq e^{-\frac{\beta}{2x}}e^{-\frac{\beta}{2y}-\frac{\gamma}{y}},
\end{aligned}\end{equation}
we may similarly, for any $r\in\posintegers\setminus\{0,1\}$, write 
\begin{equation}\begin{aligned}
    \T^r(x,y)
    &\geq e^{-\frac{\beta}{2x}}
        \left(
            \int_{[0,1]}
                e^{-\frac{\beta+2\gamma}{2x'}}
                \T^{r-1}(x',y)
            dx'
        \right)\\
    &\geq e^{-\frac{\beta}{2x}}
        \left(
            \int_{[0,1]}
                e^{-\frac{\beta+2\gamma}{x'}}
            dx'
        \right)
        \left(
            \int_{[0,1]}
                e^{-\frac{\beta}{2x''}}
                \T^{r-2}(x'',y)
                e^{-\frac{\beta}{2y}}
            dx''
        \right)\\
        &= ce^{-\frac{\beta}{2x}}
        \left(
            \int_{[0,1]}
                e^{-\frac{\beta}{2x''}}
                \T^{r-2}(x'',y)
                e^{-\frac{\beta}{2y}}
            dx''
        \right),
        \label{eq:TapproximationsRgeqFour}
\end{aligned}\end{equation}
where
\begin{equation}
    c=c(\beta,\gamma)\coloneqq \int_{[0,1]}
                e^{-\frac{\beta+2\gamma}{u}}du.
\end{equation}
Note that $c\in(0,1]$ is a non-zero constant for all values of the parameters $\beta$ and $\gamma$.
The first bound in both Equations \eqref{eq:TapproximationsRgeqTwo} and \eqref{eq:TapproximationsRgeqFour} is valid also for $r=1$.

Further, we write the similar approximations that for any $m,n\in\posintegers$ we have
\begin{equation}\begin{aligned}
    \T^{m+2+n}(u,v)
        &\leq
    \left(\int_{[0,1]}
    \T^{m}(u,u')
    e^{-\frac{\beta}{2u'}}
    du'\right)
    \left(\int_{[0,1]}
    e^{-\frac{\beta}{2v'}}
    \T^{n}(v',v)
    dv'
    \right)
    ,
    \label{eq:splitTinTheMiddle}
\end{aligned}\end{equation}
and
\begin{equation}\begin{aligned}
    \T^{m+3+n}(u,v)
        &\geq
    \left(\int_{[0,1]}
    \T^{m}(u,u')
    e^{-\frac{\beta}{2u'}}
    du'\right)
    c
    \left(\int_{[0,1]}
    e^{-\frac{\beta}{2v'}}
    \T^{n}(v',v)
    dv'
    \right)
    .
    \label{eq:splitTinTheMiddle-Denominator}
\end{aligned}\end{equation}

Let $p_1=|I|$, $p_2=|J|$, and let $r$ and $r'$ be the number of consecutive indices separating $I$ and $J$. Without loss of generality we assume that $r\leq r'$. Hence by the definition in Equation \eqref{eq:rDef},
\begin{equation}
    r(I,J) = r.
\end{equation}
This allows us to explicitly write the index sets as
\begin{equation}\begin{aligned}
    I &= \{1,\dots,p_1\},\\
    J &= \{ p_1+r+1, \dots, p_1+r+p_2\}.
\end{aligned}\end{equation}
Assume, for now, that $p_1,p_2\geq 2$.

Using the functions $Q$ and $\T$, we now write the density of $\vec Y_{I}$, Equation \eqref{eq:YdistNextNearestWithQ}, as
    \begin{equation}\begin{aligned}
    f_{\vec Y_I}(\vec y_I) 
    &=
    \frac{
    1}{Z_N}
        \bigg(\prod_{i=1}^{p_1-1}Q(y_i,y_{i+1})\bigg) \T^{r+p_2+r'}(y_{p_1},y_1),
    \label{eq:densityWithT}
\end{aligned}\end{equation}
and the density of $\vec Y_{I}$ conditioned on $\vec Y_{J}$ as
\begin{equation}\begin{aligned}
    f_{\vec Y_I|\vec Y_J=\vec y_J}(\vec y_I)
    &= \frac{
        \Big(\!\!\prod_{i=1}^{p_1-1}Q(y_i,y_{i+1})\!\Big)  \T^{r}(y_{p_1},y_{p_1+r+1}) \Big(\!\!\prod_{j=p_1+r+1}^{p_1+r+p_2-1}Q(y_j,y_{j+1})\!\Big) \T^{r'}(y_{p_1+r+p_2},y_1)
    }{
        \Big( \prod_{j=p_1+r+1}^{p_1+r+p_2-1}Q(y_j,y_{j+1})\Big) \T^{r'+p_1+r}(y_{p_1+r+p_2},y_{p_1+r+1})
    }\\[5pt]
    &= \frac{
        \Big(\prod_{i=1}^{p_1-1}Q(y_i,y_{i+1})\Big)  \T^{r}(y_{p_1},y_{p_1+r+1})  \T^{r'}(y_{p_1+r+p_2},y_1)
    }{
        \T^{r'+p_1+r}(y_{p_1+r+p_2},y_{p_1+r+1})
    }\\[5pt]
    &= \frac{
        Z_N  \T^{r}(y_{p_1},y_{p_1+r+1})  \T^{r'}(y_{p_1+r+p_2},y_1)
    }{
        \T^{r+p_2+r'}(y_{p_1},y_{y_1})
        \T^{r'+p_1+r}(y_{p_1+r+p_2},y_{p_1+r+1})
    }f_{\vec Y_I}(\vec y_I).
    \label{eq:conditionalDensityWithT}
\end{aligned}\end{equation}

Combining Equations \eqref{eq:densityWithT} and \eqref{eq:conditionalDensityWithT} gives the expression
\begin{equation}\begin{aligned}
    &f_{\vec Y_I|\vec Y_J=\vec y_J}(\vec y_I) - f_{\vec Y_I}(\vec y_I)
    =\left( \frac{
        Z_N\T^{r}(y_{p_1},y_{p_1+r+1})  \T^{r'}(y_{p_1+r+p_2},y_1)
    }{
        \T^{r+p_2+r'}(y_{p_1},y_1)
        \T^{r'+p_1+r}(y_{p_1+r+p_2},y_{p_1+r+1})
    }
    -1\right)f_{\vec Y_I}(\vec y_I),
    \label{eq:expressionToBound}
\end{aligned}\end{equation}
so we require an absolute bound on the bracket on the right rand side, i.e. the function
\begin{equation}\begin{aligned}    g(u_1,u_2,u_3,u_4) &\coloneqq \frac{
        Z_N
        \T^{r}(u_2,u_3) \T^{r'}(u_4,u_1)
        - \T^{r+p_2+r'}(u_2,u_1)
        \T^{r'+p_1+r}(u_4,u_3)
    }{
        \T^{r+p_2+r'}(u_2,u_1)
        \T^{r'+p_1+r}(u_4,u_3)
    },
    \label{eq:clusterEqOne}
\end{aligned}\end{equation}
where the variables $y_1,y_{p_1},y_{p_1+r+1}$ and $y_{p_1+r+p_2}$ have been replaced by $u_1,u_2,u_3$ and $u_4$, respectively, for increased legibility. We now rewrite $g(\vec u)$ by making explicit some of the integrals hidden within the notation.
The dummy variables $v_1,\dots,v_4$ in what follows correspond to the integrated arguments $y_1,y_{p_1},y_{p_1+r+1}$ and $y_{p_1+r+p_2}$, respectively.
We rewrite $Z_N$ as
\begin{equation}\begin{aligned}
    Z_N
    &= \int_{[0,1]^4} \T^{p_1-2}(v_1,v_2)\T^{r}(v_2,v_3) \T^{p_2-2}(v_3,v_4) \T^{r'}(v_4,v_1) dv_1 dv_2 dv_3 dv_4.
    \label{eq:ZnwithT}
\end{aligned}\end{equation}
We rewrite the second term in the numerator of \eqref{eq:clusterEqOne} by making two integrals in each factor explicit:
\begin{equation}\begin{aligned}
    &\T^{r+p_2+r'}(u_2,u_1)\T^{r'+p_1+r}(u_4,u_3) \\
    &\quad= \int_{[0,1]^2}\T^{r}(u_2,v_3)\T^{p_2-2}(v_3,v_4)\T^{r'}(v_4,u_1) dv_3dv_4\\
           &\quad\quad\times 
    \int_{[0,1]^2}\T^{r'}(u_4,v_1) \T^{p_1-2}(v_1,v_2)  \T^{r}(v_2,u_3) dv_1dv_2.
    \label{eq:TfactorsTwo}
\end{aligned}\end{equation}
Equations \eqref{eq:ZnwithT} and \eqref{eq:TfactorsTwo} inserted into Equation \eqref{eq:clusterEqOne} leads to 
\begin{equation}\begin{aligned}
    g(\vec u)
    &= \int_{[0,1]^4}\frac{
            \T^{p_1-2}(v_1,v_2)
            \T^{p_2-2}(v_3,v_4)
        }{
        \T^{r+p_2+r'}(u_2,u_1)
        \T^{r'+p_1+r}(u_4,u_3)
        }\\
            &\quad
        \times \Big(
            \T^{r}(u_2,u_3)
            \T^{r}(v_2,v_3)
            \T^{r'}(u_4,u_1)
            \T^{r'}(v_4,v_1)\\
        &\quad\quad
            - 
            \T^{r}(u_2,v_3)
            \T^{r}(v_2,u_3) 
            \T^{r'}(u_4,v_1)
            \T^{r'}(v_4,u_1)
        \Big)
    d\vec v.
    \label{eq:clusterEqTwo}
\end{aligned}\end{equation}

Let us for now restrict ourselves to the case when
\begin{equation}\begin{aligned}
    r &\geq 2,\\
    r' &\geq r+2,
\end{aligned}\end{equation}
and note that this implies that $r+p_2+r',\ r'+p_1+r\geq 6$. We now rewrite Equation \eqref{eq:clusterEqTwo} using the approximation from Equation \eqref{eq:TapproximationsRgeqTwo} for each of the four terms in the inner bracket and the approximation from Equation \eqref{eq:TapproximationsRgeqFour} in the denominator. We get
\begin{equation}\begin{aligned}
    |g(\vec u)|
    &\leq
    \int_{[0,1]^4}\bigg(\frac{
            \T^{p_1-2}(v_1,v_2)
        }{
        c^2
        e^{-\frac{\beta}{2u_3}
           -\frac{\beta}{2u_4}}
        \Big(\int_{[0,1]^2}
            e^{-\frac{\beta}{2\tilde v}}
            \T^{r'+p_1+r-4}(\tilde v,\tilde v')
            e^{-\frac{\beta}{2\tilde v'}}
            d\tilde v d\tilde v'\Big)
        }\\
        &\quad\times 
        \frac{
            \T^{p_2-2}(v_3,v_4)
        }{
        c^2
        e^{-\frac{\beta}{2u_1}
           -\frac{\beta}{2u_2}}
        \Big(\int_{[0,1]^2}
            e^{-\frac{\beta}{2\tilde v}}
            \T^{r+p_2+r'-4}(\tilde v,\tilde v')
            e^{-\frac{\beta}{2\tilde v'}}
            d\tilde v d\tilde v'\Big)
        }
        \Big(\prod_{j=1}^4e^{-\frac{\beta}{2u_j}-\frac{\beta}{2v_j}}\Big)\\
            &\quad\times
        \int_{[0,1]^8} e^{-\sum_{j=1}^8\frac{\beta}{2z_j}}\Big|
            \T^{r-2}(z_1,z_2)
            \T^{r-2}(z_3,z_4)
            \T^{r'-2}(z_5,z_6)
            \T^{r'-2}(z_7,z_8)\\
        &\quad\quad
            - 
            \T^{r-2}(z_1,z_4)
            \T^{r-2}(z_3,z_2)
            \T^{r'-2}(z_5,z_8)
            \T^{r'-2}(z_7,z_6)
        \Big|dz_1\cdots dz_8\bigg)
    dv_1\cdots dv_4\\
    &=
    \prod_{j=1}^2 
    \bigg(\frac{
         \int_{[0,1]^2}
            e^{-\frac{\beta}{2v}}
            \T^{p_j-2}(v,v')
            e^{-\frac{\beta}{2v'}}
        dv dv'
    }{
         c^2\int_{[0,1]^2}
            e^{-\frac{\beta}{2\tilde v}}\T^{r+p_j+r'-4}(\tilde v,\tilde v')e^{-\frac{\beta}{2\tilde v'}}
            d\tilde vd\tilde v'
    }\bigg)\\   
    &\quad\times\int_{[0,1]^8}
            e^{-\sum_{j=1}^8\frac{\beta}{2z_j}}\Big|
            \T^{r-2}(z_1,z_2)
            \T^{r-2}(z_3,z_4)
            \T^{r'-2}(z_5,z_6)
            \T^{r'-2}(z_7,z_8)\\
        &\quad\quad\quad
            - 
            \T^{r-2}(z_1,z_4)
            \T^{r-2}(z_3,z_2)
            \T^{r'-2}(z_5,z_8)
            \T^{r'-2}(z_7,z_6)
        \Big|dz_1\cdots dz_8.
    \label{eq:clusterEqThree}
\end{aligned}\end{equation}
Note that all four variables $u_1,u_2,u_3$ and $u_4$ cancel in Equation \eqref{eq:clusterEqThree} and what remains is to bound an expression which only depends on $\beta$ and $\gamma$. Using the approximation in Equation \eqref{eq:splitTinTheMiddle} on the final integral gives
\begin{equation}\begin{aligned}
    &\int_{[0,1]^8}
    e^{-\sum_{j=1}^8\frac{\beta}{2z_j}}
    \Big|
            \T^{r-2}(z_1,z_2)
            \T^{r-2}(z_3,z_4)
            \T^{r'-2}(z_5,z_6)
            \T^{r'-2}(z_7,z_8)\\
        &\quad\quad\quad
            - 
            \T^{r-2}(z_1,z_4)
            \T^{r-2}(z_3,z_2)
            \T^{r'-2}(z_5,z_8)
            \T^{r'-2}(z_7,z_6)
    \Big|dz_1\cdots dz_8\\
    &=
    \int_{[0,1]^{12}}
        e^{-\sum_{j=1}^8\frac{\beta}{2z_j}
        -\sum_{j=1}^4\frac{\beta}{2v_j}}
    \Big|
            \T^{r-2}(z_1,z_2)
            \T^{r-2}(z_3,z_4)
            \T^{r-2}(z_5,v_1)
            \T^{r-2}(z_7,v_3)\\
        &\quad\quad\quad
            - 
            \T^{r-2}(z_1,z_4)
            \T^{r-2}(z_3,z_2)
            \T^{r-2}(z_5,v_3)
            \T^{r-2}(z_7,v_1)
    \Big|\\
    &\quad\quad\quad\times
    \T^{r'-r-2}(v_1,v_2)
    \T^{r'-r-2}(v_3,v_4)
    dv_1 \cdots dv_4 dz_1\cdots dz_8\\
    &\leq
    \bigg(\int_{[0,1]^{8}}
        e^{-\sum_{j=1}^8\frac{\beta}{2z_j}}
    |\Delta^{r-2}(\vec z)|d\vec z\bigg)\bigg(
    \int_{[0,1]^2}
            e^{-\frac{\beta}{2 v}}\T^{r'-r-2}( v, v')e^{-\frac{\beta}{2 v'}}
            d vd v'\bigg)^2,
            \label{eq:rPrimeSepFromDelta}
\end{aligned}\end{equation}
where the final step denotes by $\Delta^r(\vec z)$ the function
\begin{equation}\begin{aligned}
    \Delta^{r}(\vec z) \coloneqq{}&
        \T^{r}(z_1,z_2) 
        \T^{r}(z_3,z_4)
        \T^{r}(z_5,z_6)
        \T^{r}(z_7,z_8)\\
    {}&
        - 
        \T^{r}(z_1,z_4) 
        \T^{r}(z_3,z_2)
        \T^{r}(z_5,z_8)
        \T^{r}(z_7,z_6),
\end{aligned}\end{equation}
for $\vec z\in[0,1]^8$. We can now remove the dependence on $r'$ from Equation \eqref{eq:clusterEqThree} by using the bound in Equation \eqref{eq:rPrimeSepFromDelta} and apply the approximation in Equation \eqref{eq:splitTinTheMiddle-Denominator} on the denominator in Equation \eqref{eq:clusterEqThree}. We get that
\begin{equation}\begin{aligned}
    |g(\vec u)|
    &\leq
    \prod_{j=1}^2 
    \bigg(\frac{
         \int_{[0,1]^2}
            e^{-\frac{\beta}{2v}}
            \T^{p_j-2}(v,v')
            e^{-\frac{\beta}{2v'}}
        dv dv'
    }{
         c^4\Big(\int_{[0,1]^2}
            e^{-\frac{\beta}{2 v}}\T^{p_j-2}( v, v')e^{-\frac{\beta}{2 v'}}
            d vd v'\Big)
            \Big(\int_{[0,1]^2}
            e^{-\frac{\beta}{2 v}}\T^{2r-6}( v, v')e^{-\frac{\beta}{2 v'}}
            d vd v'\Big)
    }\\   
    &\quad\times
        \frac{\int_{[0,1]^2}
            e^{-\frac{\beta}{2 v}}\T^{r'-r-2}( v, v')e^{-\frac{\beta}{2 v'}}
            d vd v'}{
                \int_{[0,1]^2}
            e^{-\frac{\beta}{2 v}}\T^{r'-r-2}( v, v')e^{-\frac{\beta}{2 v'}}
            d vd v'
        }\bigg)\times
        \int_{[0,1]^8}
            e^{-\sum_{i=1}^8\frac{\beta}{2z_i}}|\Delta^{r-2}(\vec z)|
    d\vec z\\
    &=
    \frac{
         \int_{[0,1]^8}
            e^{-\sum_{i=1}^8\frac{\beta}{2z_i}}|\Delta^{r-2}(\vec z)|d\vec z
    }{
         c^8
            \Big(\int_{[0,1]^2}
            e^{-\frac{\beta}{2\tilde v}}\T^{2r-6}(\tilde v,\tilde v')e^{-\frac{\beta}{2\tilde v'}}
            d\tilde vd\tilde v'\Big)^2
    }
    ,\label{eq:clusterEqFour}
\end{aligned}\end{equation}
The function $\Delta^{r}(\vec z)$ is now bound by an extension of Lemma 3.1 in \cite{Turova_22}.
\begin{lemma}\cite{Turova_22}
    For any parameters $\beta,\gamma\in\real^+\setminus\{0\}$, any integer $r\in\{1,\dots,\lfloor (N-p_1-p_2)/2\rfloor\}$, and all vectors $\vec z\in[0,1]^8$ there is an $0<\epsilon<1$, only dependent on $\gamma$, such that
    \begin{equation}\begin{aligned}
            \int_{[0,1]^8} e^{-\sum_{i=1}^8\frac{\beta}{2z_i}}
            |\Delta^{r}(\vec{z})|d\vec z
        &\leq 
            \epsilon^{r+2}
        \left(\int_{[0,1]^{2}} e^{-\frac{\beta}{2u}}\T^{\lfloor r/2\rfloor}(u,v)e^{-\frac{\beta}{2v}}dudv\right)^{8},
        \label{eq:clusterLemmaOnDeltaFct}
    \end{aligned}\end{equation}
    and in the case when $\gamma=0$, one may set $\epsilon=0$.
    \label{lem:TurovaDeltaBound}
\end{lemma}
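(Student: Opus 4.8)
The plan is first to reduce the eight‑variable quantity to the four‑variable ``swap determinant'' treated in \cite[Lemma~3.1]{Turova_22}, and then to recall why that estimate holds, the mechanism being that the transfer kernel $Q$ becomes, upon iteration, effectively a product kernel. Throughout write $\phi(x):=e^{-\beta/(2x)}$ and $R(x,y):=e^{-\gamma/(x+y)}$, so that $Q(x,y)=\phi(x)\phi(y)R(x,y)$, with $R$ the only obstruction to $Q$ being separable; put $M_\ell:=\int_{[0,1]^2}\phi(u)\,\T^\ell(u,v)\,\phi(v)\,du\,dv$, so that the right‑hand side of \eqref{eq:clusterLemmaOnDeltaFct} equals $M_{\lfloor r/2\rfloor}^{\,8}$.

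For the reduction, group the two products in $\Delta^r(\vec z)$ as $PS-P'S'$, where $P=\T^r(z_1,z_2)\T^r(z_3,z_4)$ and $P'=\T^r(z_1,z_4)\T^r(z_3,z_2)$ involve only $z_1,\dots,z_4$ and $S,S'$ are the analogous expressions in $z_5,\dots,z_8$. Since $\T^r\ge 0$, the identity $PS-P'S'=(P-P')S+P'(S-S')$ gives $|\Delta^r|\le|P-P'|\,S+P'\,|S-S'|$; integrating against $e^{-\sum_{i=1}^8\beta/(2z_i)}$, the $S$‑ and $P'$‑integrals each split into two copies of $M_r$, while the remaining two integrals coincide after relabeling, so that
\[
\int_{[0,1]^8}e^{-\sum_{i=1}^{8}\frac{\beta}{2z_i}}\,|\Delta^r(\vec z)|\,d\vec z
\;\le\;2M_r^{\,2}\int_{[0,1]^4}e^{-\sum_{i=1}^{4}\frac{\beta}{2z_i}}\,\bigl|\T^r(z_1,z_2)\T^r(z_3,z_4)-\T^r(z_1,z_4)\T^r(z_3,z_2)\bigr|\,dz .
\]
The remaining four‑variable integral is precisely the quantity bounded (in the non‑circular case $|I|=|J|=1$) in \cite[Lemma~3.1]{Turova_22}, so it remains to recall that argument and to check that it survives the passage to the circle.

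Its key point is that $Q^2(x,y)=\phi(x)\phi(y)\rho(x,y)$ with $\rho(x,y)=\int_0^1 e^{-\beta/v}R(x,v)R(v,y)\,dv$, and since $e^{-\gamma/v}\le R(x,v)\le 1$ for $x\in[0,1]$ one has $\int_0^1 e^{-(\beta+2\gamma)/v}\,dv\le\rho(x,y)\le\int_0^1 e^{-\beta/v}\,dv$ uniformly in $x,y\in[0,1]$ --- the singularity of $R$ on $\{x=0\}\cup\{y=0\}$ being absorbed by $e^{-\beta/v}$ --- with the ratio of these bounds, and hence the projective diameter of the range of the operator $Q^2$, tending to $0$ as $\gamma\to 0$. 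Consequently $Q^2$ contracts the Hilbert projective metric by a factor $\tau=\tau(\gamma)\in[0,1)$ with $\tau\to0$ as $\gamma\to0$, so $(Q^2)^k(x,y)=\phi(x)\phi(y)\rho_k(x,y)$ with $\rho_k$ within a multiplicative error $1+O(\tau^{\,k})$ of a separable function; writing $\T^r=Q^{r+1}=(Q^2)^k\circ Q^s$ with $k=\lfloor(r+1)/2\rfloor\ge r/2$ and $s\in\{0,1\}$, and integrating the middle block against its separable leading part, yields $\T^r(x,y)=f(x)g(y)\,(1+\theta_r(x,y))$ with $|\theta_r|\le C_1\tau^{\,k}$. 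A product kernel is blind to the swap, $f(z_1)g(z_2)f(z_3)g(z_4)=f(z_1)g(z_4)f(z_3)g(z_2)$, so substituting this approximation into the four‑variable integrand the separable leading terms cancel identically, and (using $f(z_i)g(z_j)\le 2\,\T^r(z_i,z_j)$) one is left with $\bigl|\T^r(z_1,z_2)\T^r(z_3,z_4)-\T^r(z_1,z_4)\T^r(z_3,z_2)\bigr|\le C_2\tau^{\,k}\,\T^r(z_1,z_2)\T^r(z_3,z_4)$, whose integral against $e^{-\sum_{i=1}^4\beta/(2z_i)}$ equals $C_2\tau^{\,k}M_r^{\,2}$. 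Together with the reduction this bounds the left‑hand side of \eqref{eq:clusterLemmaOnDeltaFct} by $2C_2\tau^{\,k}M_r^{\,4}$; the standard Perron--Frobenius asymptotics $M_\ell\sim c\lambda_0^{\,\ell}$ for the positive Hilbert--Schmidt kernel $Q$ then let one re‑express $M_r^{\,4}$ through $M_{\lfloor r/2\rfloor}^{\,8}$, and since $k\ge r/2$ one takes $\epsilon=\epsilon(\gamma)$ to be any fixed number in $(\tau^{1/2},1)$, absorbing constants for large $r$ and enlarging $\epsilon$ towards $1$ for the finitely many small $r$. When $\gamma=0$, $R\equiv 1$, so $Q$ and every $\T^r$ is a genuine product kernel, the swap determinant vanishes identically, $\tau=0$, and $\epsilon=0$ works.

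The one delicate point is the uniform two‑sided bound on $\rho$ above: the single‑step kernel $Q$ does \emph{not} have finite projective diameter, since $Q(x,y)/(\phi(x)\phi(y))=R(x,y)\to0$ as $(x,y)\to(0,0)$, so one must pass to $Q^2$ (or any $Q^m$ with $m\ge2$), whose ratio is bounded below precisely because the offending factor is now integrated against $e^{-\beta/v}\,dv$. Establishing this, and then arranging the resulting geometric decay into the normalized form \eqref{eq:clusterLemmaOnDeltaFct} with a rate $\epsilon$ depending only on $\gamma$ (the various prefactors, unlike $\tau$, carry $\beta$‑dependence, which must be shown harmless), is the technical heart; since its four‑variable incarnation is exactly \cite[Lemma~3.1]{Turova_22}, in practice that lemma is cited once the eight‑to‑four reduction above is in place.
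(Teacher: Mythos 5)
Your opening reduction is correct and genuinely different from the paper's route. Writing $\Delta^r=PS-P'S'=(P-P')S+P'(S-S')$ and integrating does give $\int e^{-\sum_i\beta/(2z_i)}|\Delta^r|\,d\vec z\le 2M_r^2\int e^{-\sum_{i=1}^4\beta/(2z_i)}\big|\T^r(z_1,z_2)\T^r(z_3,z_4)-\T^r(z_1,z_4)\T^r(z_3,z_2)\big|\,dz$, with $M_\ell=\int e^{-\beta/(2u)}\T^\ell(u,v)e^{-\beta/(2v)}\,du\,dv$, so the eight-variable statement would follow from a suitably normalised four-variable one. The paper does not do this: it re-runs the recursion of \cite{Turova_22} directly at the eight-variable level, $\Delta^r(\vec z)=\int_{[0,1]^8}\Delta^{r-2}(\vec z')\prod_{i=1}^8 Q(z_i,z_i')\,d\vec z'$, iterated $\lfloor r/2\rfloor$ times so that the eight factors $\T^{\lfloor r/2\rfloor-2}$ appear with exactly the weights of the right-hand side of \eqref{eq:clusterLemmaOnDeltaFct}, and it gets the smallness from the decomposition $Q(x,y)=e^{-\frac{\beta}{2x}-\frac{\gamma}{4x}}e^{-\frac{\beta}{2y}-\frac{\gamma}{4y}}-e^{-\frac{\beta}{2x}-\frac{\beta}{2y}-\frac{\gamma}{x+y}}\big(1-e^{-\gamma\frac{(x-y)^2}{4xy(x+y)}}\big)$, choosing $\epsilon$ to dominate the last bracket, which is manifestly a $\gamma$-only quantity. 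Your Birkhoff/Hilbert-metric explanation of why iterates of $Q$ become asymptotically separable is a reasonable substitute mechanism for that part.

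The gap is in the final normalisation, and it is not cosmetic. The lemma demands $\epsilon<1$ depending \emph{only} on $\gamma$, uniformly over all $\beta>0$ and all admissible $r$ (this is what later gives $\alpha=\alpha(\gamma)$ and $\alpha\to\infty$ as $\gamma\to0$ in Theorem \ref{thm:clusterDecayCoulomb}). First, the $\beta$-independence of your contraction rate $\tau$ does not follow from the bounds you display: their ratio $\int_0^1e^{-\beta/v}dv\big/\int_0^1e^{-(\beta+2\gamma)/v}dv$ depends on $\beta$ (it can be bounded by the $\gamma$-only quantity $\big(\int_0^1e^{-2\gamma/v}dv\big)^{-1}$ via a Chebyshev-type correlation inequality, but you never establish this). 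Second, and more seriously, your bound is $2C_2\tau^kM_r^4$, and converting $M_r^4$ into $M_{\lfloor r/2\rfloor}^8$ costs factors such as $M_r/M_{\lfloor r/2\rfloor}^2$, which by the paper's own splitting inequalities \eqref{eq:splitTinTheMiddle}--\eqref{eq:splitTinTheMiddle-Denominator} are of size $\big(c\int_0^1e^{-\beta/u}du\big)^{-O(1)}$, i.e.\ $\beta$-dependent and unbounded as $\beta\to\infty$; the constants $C_1,C_2$ (and the requirement $C_1\tau^k\le 1/2$ behind $fg\le 2\T^r$) carry the same dependence. Your absorption scheme --- absorb constants ``for large $r$'' and ``enlarge $\epsilon$ towards $1$ for the finitely many small $r$'' --- then makes either the threshold on $r$ or $\epsilon$ itself depend on $\beta$, which contradicts the statement being proved. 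Nor does falling back on citing Lemma 3.1 of \cite{Turova_22} after your reduction close this: as quoted in the paper, that bound is pointwise and normalised by $\big(\int 2\T^{\lfloor r/2\rfloor}(u,v)e^{-\beta/(2v)}dudv\big)^4$, so the same $\beta$-dependent renormalisation reappears. This uniform-in-$\beta$, all-$r$ bookkeeping is precisely what the paper's eight-variable recursion (with the $\gamma$-only choice of $\epsilon$ above) is designed to deliver, and it is the step your proposal leaves open.
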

\begin{proof}
    If $\gamma=0$, the integrals factorise since they consist of products of the function $Q(\cdot,\cdot)$ defined in Equation \eqref{eq:Qdef}. The difference on the left hand side of Equation \eqref{eq:clusterLemmaOnDeltaFct}  is therefore identically zero and the theorem statement follows. We now consider the case when $\gamma>0$.
    
    Lemma 3.1 in \cite{Turova_22} gives the bound that for any $\gamma>0$, there exists an $0<\epsilon<1$ such that for any $\beta>0$, $\vec z\in[0,1]^4$ and $r\in\{1,\dots,\left\lfloor (N-2)/2\right\rfloor\}$, we have
    \begin{equation}\begin{aligned}
        |\T^{r}(z_1,z_2) 
        \T^{r}(z_3,z_4)
        - 
        \T^{r}(z_1,z_3) 
        \T^{r}(z_2,z_4)|
            &\leq
        \epsilon^{r+1} \Big( 
        \int_{[0,1]^2}
        2\T^{\lfloor r/2\rfloor}(u,v)
        e^{-\frac{\beta}{2v}}
        dudv
        \Big)^4.
\end{aligned}\end{equation}
    This corresponds to our case when $p_1=p_2=1$, but with only two factors, instead of four, in each term on the left-hand-side. The function in \cite{Turova_22} corresponding to our $\T^r$, call it $\T^r_{\scalebox{0.7}{\cite{Turova_22}}}$, is defined slightly differently and we have
    \begin{equation}
        \T^r_{\scalebox{0.7}{\cite{Turova_22}}}(x,y) = \T^{r-1}(x,y),
    \end{equation}
    in relation to the notation used here. The proof in \cite{Turova_22} is readily extended with some minor tweaks, so it will only be outlined here.
    
    A recurrence relation for $\Delta^r(\vec z)$ is obtained by making two integrals explicit on either side of each of the four factors. For $\vec z\in[0,1]^8$, we get
    \begin{equation}\begin{aligned}
        \Delta^{r}(\vec z) &=
            \T^{r}(z_1,z_2) 
            \T^{r}(z_3,z_4)
            \T^{r}(z_5,z_6)
            \T^{r}(z_7,z_8)\\
        &\quad\quad
            - 
            \T^{r}(z_1,z_4) 
            \T^{r}(z_3,z_2)
            \T^{r}(z_5,z_8)
            \T^{r}(z_7,z_6)\\
        &=  \int_{[0,1]^8} 
            Q(z_1,z_1')
            Q(z_3,z_3')
            Q(z_5,z_5')
            Q(z_7,z_7')\\
        &\quad\quad\quad\halfquad
            \times\bigg(
            \T^{r-2}(z'_1,z'_2)
            \T^{r-2}(z'_3,z'_4)
            \T^{r-2}(z'_5,z'_6)
            \T^{r-2}(z'_7,z'_8)\\
        &\quad\quad\quad\quad\halfquad
            - 
            \T^{r-2}(z'_1,z'_4)
            \T^{r-2}(z'_3,z'_2)
            \T^{r-2}(z'_5,z'_8)
            \T^{r-2}(z'_7,z'_6)
            \bigg)\\
        &\quad\quad\quad\halfquad\times
            Q(z'_2,z_2)
            Q(z'_4,z_4)
            Q(z'_6,z_6)
            Q(z'_8,z_8)
            d\vec z'\\
        &=    \int_{[0,1]^8} \Delta^{r-8}(\vec z')\prod_{i=1}^8 Q(z_i,z'_i)d\vec z'.
    \end{aligned}\end{equation}
    Doing this $j$ times, where
    \begin{equation}
        j=\left\lfloor\frac{r}{2}\right\rfloor
    \end{equation}
    gives
    \begin{equation}\begin{aligned}
        \Delta^{r}(\vec z^{0})
        &=    \int_{[0,1]^{8j}} 
        \Delta^{r-8j}(\vec{z}^{j})
        \prod_{l=1}^{j}
        \prod_{i=1}^8 Q(z_i^{l-1},z_i^{l})
        d\vec z^{1}\cdots d\vec{z}^{j}\\
        &=    \int_{[0,1]^{8}} 
        \Delta^{r-8j}(\vec{z}^{j})
        \prod_{i=1}^8 \T^{j-2}(z_i^{0},z_i^{j})
        d\vec z^{j}.
        \label{eq:DeltaRecurrenceOne}
    \end{aligned}\end{equation}
    The technique used in \cite{Turova_22} still applies, which exploits that the difference within the term $\Delta^{r-8j}(\vec{z}^{j})$ is small when at least one of the variables being integrated over is small. The adjustment needed from this point onwards in the proof is only some technical differences due to the recursion steps each introducing eight variables instead of four.

    A final remark is that for small $\gamma$, the proof involves writing $Q(x,y)$ as
    \begin{equation}
        Q(x,y)=
            e^{-\frac{\beta}{2x}-\frac{\gamma}{4x}}
            e^{-\frac{\beta}{2y}-\frac{\gamma}{4y}}
            -e^{-\frac{\beta}{2x}-\frac{\beta}{2y}-\frac{\gamma}{x+y}}\Big(
            1-e^{-\gamma\frac{(x-y)^2}{4xy(x+y)}}\Big)
    \end{equation}
    and then one chooses $\epsilon$ such that it satisfies
    \begin{equation}
        1-e^{-\gamma\frac{(x-y)^2}{4xy(x+y)}} < \epsilon.
    \end{equation}
    This confirms that we indeed have $\epsilon\to 0$ as $\gamma\to0$.
\end{proof}

Using Lemma \ref{lem:TurovaDeltaBound} the expression in Equation \eqref{eq:clusterEqFour} simplifies into
\begin{equation}\begin{aligned}
    |g(\vec u)|
    &\leq
    \frac{
         \int_{[0,1]^8}
            e^{-\sum_{i=1}^8\frac{\beta}{2z_i}}|\Delta^{r-2}(\vec z)|d\vec z
    }{
         c^8
            \Big(\int_{[0,1]^2}
            e^{-\frac{\beta}{2v}}\T^{2r-6}( v,v')e^{-\frac{\beta}{2v'}}
            dvdv'\Big)^2
    }\\
    &\leq
    \frac{
        \epsilon^{r}
        \left(\int_{[0,1]^{2}} e^{-\frac{\beta}{2u}}\T^{\lfloor \frac{r-2}{2}\rfloor}(u,v)e^{-\frac{\beta}{2v}}dudv\right)^{8}
    }{
         c^8
            \Big(\int_{[0,1]^2}
            e^{-\frac{\beta}{2v}}\T^{2r-6}( v,v')e^{-\frac{\beta}{2v'}}
            dvdv'\Big)^2
    }.
    \label{eq:clusterEqSix}
\end{aligned}\end{equation}
We can write
\begin{equation}
    2r-6 = \begin{cases}
        4\left\lfloor\frac{r-2}{2}\right\rfloor-2,\text{ if } r \text{ is even},\\
        4\left\lfloor\frac{r-2}{2}\right\rfloor,\text{ if } r \text{ is odd},
    \end{cases}
\end{equation}
and for the moment assume that $r$ is even and $r>26$. From Equation \eqref{eq:clusterEqSix}, we use the inequalities from Equations \eqref{eq:splitTinTheMiddle-Denominator} and \eqref{eq:splitTinTheMiddle} thrice and once, respectively, to get
\begin{equation}\begin{aligned}
    |g(\vec u)|
    &\leq
    \frac{
        \epsilon^{r}
        \left(\int_{[0,1]^{2}} e^{-\frac{\beta}{2u}}\T^{\lfloor \frac{r-2}{2}\rfloor}(u,v)e^{-\frac{\beta}{2v}}dudv\right)^{8}
    }{
         c^8
            \Big(\int_{[0,1]^2}
            e^{-\frac{\beta}{2v}}\T^{2r-6}( v,v')e^{-\frac{\beta}{2v'}}
            dvdv'\Big)^2
    }\\
    &\leq
    \frac{
        \epsilon^{r}
        \left(\int_{[0,1]^{2}} e^{-\frac{\beta}{2u}}\T^{\lfloor \frac{r-2}{2}\rfloor}(u,v)e^{-\frac{\beta}{2v}}dudv\right)^{8}
    }{
         c^{14}
            \Big(\int_{[0,1]^2}
            e^{-\frac{\beta}{2v}}\T^{\lfloor \frac{r-2}{2}\rfloor}( v,v')e^{-\frac{\beta}{2v'}}
            dvdv'\Big)^6
            \Big(\int_{[0,1]^2}
            e^{-\frac{\beta}{2v}}\T^{\lfloor \frac{r-2}{2}\rfloor-11}( v,v')e^{-\frac{\beta}{2v'}}
            dvdv'\Big)^2
    }\\
    &\leq
    \frac{
        \epsilon^{r}
        \left(\int_{[0,1]^{2}} e^{-\frac{\beta}{2u}}\T^{\lfloor \frac{r-2}{2}\rfloor-11}(u,v)e^{-\frac{\beta}{2v}}dudv\right)^{2}
        \left(\int_{[0,1]^{2}} e^{-\frac{\beta}{2u}}\T^{9}(u,v)e^{-\frac{\beta}{2v}}dudv\right)^{2}
    }{
         c^{14}
            \Big(\int_{[0,1]^2}
            e^{-\frac{\beta}{2v}}\T^{\lfloor \frac{r-2}{2}\rfloor-11}( v,v')e^{-\frac{\beta}{2v'}}
            dvdv'\Big)^2
    }\\
    &\leq
    \frac{
        \epsilon^{r}
    }{
         c^{14}
    }.
    \label{eq:clusterEqSeven}
\end{aligned}\end{equation}
For lower (and odd) values of $r$, the only change is the number of times you can apply the inequality in the denominator which results in a different exponent of $c$. The above is the worst case, so for all $r\geq 2$ we have that
\begin{equation}\begin{aligned}
    |g(\vec u)|
    &\leq
    \frac{
        \epsilon^{r}
    }{
         c^{14}
    }.
\end{aligned}\end{equation}
Letting
\begin{equation}\begin{aligned}
    \alpha &\coloneqq -\ln(\epsilon)\ (>0),\\
    C &\coloneqq c^{-14},
\end{aligned}\end{equation}
gives us, from Equations \eqref{eq:expressionToBound}, \eqref{eq:clusterEqOne} and \eqref{eq:clusterEqSeven}, that
\begin{equation}\begin{aligned}
    |f_{\vec Y_I|\vec Y_J=\vec y_J}(\vec y_I) - f_{\vec Y_I}(\vec y_I)|
    &\leq
    Ce^{-\alpha r}f_{\vec Y_I}(\vec y_I),
    \label{eq:defsOfAlphaandC}
\end{aligned}\end{equation}
which concludes the proof under the current assumptions that $p_1,p_2\geq 2$ and $r\geq 2$. For $p_1=1$, the same procedure works if one imposes that
\begin{equation}
    \T^{-1}(x,y)\coloneqq 1,
\end{equation}
with the additional constraint that the variables $x$ and $y$ are the same, so integrals over both get rewritten as a single integral. For smaller values of $r$ and $r'$, the small deviations from the above approach can be incorporated in the definition of the constant $C$.

The definition of $\alpha$ in Equation \eqref{eq:defsOfAlphaandC} together with the final remark in the proof of Lemma \ref{lem:TurovaDeltaBound} directly gives the final statement in the theorem that $\alpha\to\infty$ as $\gamma\to0$. This completes the proof.  
\hfill$\Box$

\subsection{Proof of the Central Limit Theorem \ref{thm:CLT}}
\label{section:CLT}
    The structure of our proof follows that of \cite{SS_2004_CLTforRFldWExpDecay}. However, 
    to use the results of \cite{SS_2004_CLTforRFldWExpDecay} in our setting and to optimise the rate of convergence in CLT, we need to introduce some 
    modifications. 
    
    Let $\vec X$ be a centralised version of $\vec Y$:
    \begin{equation}
        X_i \coloneqq Y_i-\E(Y_i),\ \ \ i\in\{1,\dots,N\}.
    \end{equation}
    The main idea is as follows. We partition the index set $\{1,\dots, N\}$ into alternating large and small blocks (which will be denoted $V$ and $W$, respectively), and consider the sums of the corresponding variables. Then we neglect the contribution of the smaller blocks in the total sum, while approximate the sums along the larger blocks by the independent sums. We begin with defining this partition in details.
    \begin{definition}
    Let $\delta_1,\delta_2\in(0,1)$ be real numbers satisfying $\delta_1+\delta_2<1$ and $\delta_1>\delta_2$. Define $p$, $q$ and $k$ to be integer functions of $N$ given by
    \begin{equation}\begin{aligned}
        p &\coloneqq \left\lfloor N^{\delta_1} \right\rfloor,\\
        q &\coloneqq \left\lfloor N^{\delta_2} \right\rfloor,\\
        k &\coloneqq \left\lfloor \frac{N}{p+q} \right\rfloor
            = N^{1-\delta_1} + \O(N^{1-2\delta_1+\delta_2}).
        \label{CLTeq:pqkDefs}
    \end{aligned}\end{equation}    
    Since we are interested in the limit as $N\to\infty$, we may restrict ourselves to the values of $N$ that additionally satisfy
    \begin{equation}
        N = k(p+q).
    \end{equation}
        
    Define a partition of the index set $\{1,\dots,N\}$ into $k$ subsets $(V_i)_{i=1}^k$ and $(W_i)_{i=1}^k$ of sizes $p$ and $q$, respectively, by letting
        \begin{equation}\begin{aligned}
            V_1 &\coloneqq \{1, \dots, p\},\\
            W_1 &\coloneqq \{p+1, \dots, p+q\},\\
            V_{i} &\coloneqq \{j+(i-1)(p+q): j\in V_1\},\\
            W_{i} &\coloneqq \{j+(i-1)(p+q): j\in W_1\}.
        \end{aligned}\end{equation}
        \label{def:index}
    \end{definition}
    With this definition we get
    \begin{equation}
        q\ll p \ll N,
    \end{equation}
    and
    \begin{equation}\begin{aligned}
        \{1,\dots,N\}=\bigcup_{i=1}^k (V_i\cup W_i)
    \end{aligned}\end{equation}
    as desired, with each set of $p$ consecutive indices being followed by a set of $q$ consecutive indices and vice versa.

    The next preparation needed for what follows is a closer consideration of the quantity $\sigma_N$, defined in \eqref{eq:sigma} as 
    \begin{equation}
        \sigma_N^2 \coloneqq \frac{1}{N} \text{Var}\bigg(\sum_{i=1}^N X_i\bigg).
        \label{eq:sigmaWithX}
    \end{equation}
    The result of 
    Corollary 2.4 in \cite{Turova_22} 
    establishes a convergence of the densities of $X_i=X_{i,N}$ (observe the dependence on $N$ of the distribution) as $N\rightarrow \infty$, which 
    yields that the limiting distribution has a positive variance, and also allows us to derive that
    \begin{equation}
        \frac{1}{N} \text{Var}\bigg(\sum_{i=1}^N X_i\bigg){\rightarrow }
        c
    \end{equation}
    as $N\rightarrow \infty$, for some constant $c\in\real$. This gives us the following relations
    \begin{equation}\begin{aligned}
        \sigma_N^2  &= \E\Big(\sum_{i=1}^N X_1X_i\Big)\\
            &=
        \E(X_1^2) + \sum_{i=1}^{\lfloor N/2\rfloor}\E(X_1X_i)
        +\sum_{i=\lfloor N/2\rfloor+1}^N\E(X_{1,N}X_{i,N})\\
        &\overset{N\to\infty}{\longrightarrow}
        c.
        \label{CLTeq:sigmaNonZero}
    \end{aligned}\end{equation}
    
    The positivity of $\sigma_N$ can further be used to find a lower bound on the variance of a partial sum. For $p\in\posintegers$ such that
    \begin{equation}\begin{aligned}
        p \overset{N\to\infty}{\longrightarrow}&{} \infty,\\
        p \ll\halfquad&{} N,
    \end{aligned}\end{equation}
    we have, by shifting indices and using Theorem \ref{thm:clusterDecayCoulomb} and Equation \eqref{CLTeq:sigmaNonZero}, that
    \begin{equation}\begin{aligned}
        \text{Var}&\Big(\sum_{j=1}^p X_{j,N}\Big)^2
            =
        \sum_{j_1=1}^{p}
        \Big(
            \E(X_{j_1,N}^2)
            +\sum_{j_2=j_1+1}^{p} \E(X_{j_1,N}X_{j_2,N})
            +\sum_{j_2=1}^{j_1-1} \E(X_{j_1,N}X_{j_2,N})
        \Big)\\
            &=
        2\sum_{j_1=1}^{\lfloor p/2 \rfloor}
        \Big(
             \E(X_{1,N}^2) 
             + \sum_{j_2=2}^{p-j_1+1} 
             \E(X_{1,N}X_{j_2,N})
             + \sum_{j_2=N-j_1+1}^{N} 
             \E(X_{j_2,N}X_{1,N})
        \Big)+o(1)\\
            &=
        \sum_{j_1=1}^{p}
        \Big(
             \E(X_{1,N}^2) 
             + \sum_{j_2=2}^{\lfloor N/2 \rfloor} 
             \E(X_{1,N}X_{j_2,N})
             + \sum_{j_2=\lfloor N/2 \rfloor+1}^{N} 
             \E(X_{j_2,N}X_{1,N})\\
             &\quad
        +o(e^{-\alpha(p-j_1+1)})
        +o(e^{-\alpha j_1})
        \Big)\\
            &=
        p\big(\sigma_N+o(1)\big).
    \end{aligned}\end{equation}
    This together with the non-zero limit of $\sigma_N$ in Equation \eqref{CLTeq:sigmaNonZero} means that for large $N$ (and hence large $p$), there exists a constant $c_1\in\real^+$ such that
    \begin{equation}
        \E\left(\sum_{i=1}^p X_{i,N} \right)^2
        \geq c_{1} p,
        \label{lemB-Eeq:Squarebound}
    \end{equation}
    
    We next make use of Theorem 2 in \cite{Petrov_75}, which states the following. Let $z\in\real$ and $G_1(z),G_2(z)\in\real$ be functions such that $G_1(z)$ is non-decreasing and $G_2(z)$ is differentiable with bounded variation. Suppose that $G_1(-\infty)=G_2(-\infty)$ and $G_1(\infty)=G_2(\infty)$. Let $g_1(t),g_2(t)$ be the Fourier-Stiltjes transforms of $G_1(z)$ and $G_2(z)$, respectively. Let $c'\in\real^+$ be a constant such that 
    \begin{equation}
        \sup_z|G_2'(z)|\leq c',
    \end{equation}
    and let $T$ be an arbitrary positive number. Then for every $b > (2\pi)^{-1}$ we have
    \begin{equation}
        \sup_z | G_1(z)-G_2(z) |
            \leq
        b\int_{-T}^T \Big| \frac{g_1(t)-g_2(t)}{t} \Big| dt + c(b)\frac{c'}{T},
        \label{eq:PetrovBound}
    \end{equation}
    where $c(b)$ is a positive constant which only depends on $b$. We let $b=1$, $G_1(z)$ be the probability distribution of the sum under consideration and $G_2(z)$ be the probability distribution of a standard normal $Z$. Then Equation \eqref{eq:PetrovBound} implies that there exists a constant $c_{2}\in\real^+$ such that
    \begin{equation}\begin{aligned}
        \sup_z\left| \prob\bigg(\frac{1}{\sqrt{\smash[b]{N\sigma^2_N}}}\sum_{j=1}^N X_j \leq z\bigg)-\prob(Z\leq z)\right| 
            &\leq
        \int_{-T}^{T} \frac{1}{|t|}\Big| \E\big(e^{it\frac{1}{\sqrt{\smash[b]{N\sigma^2_N}}}\sum_{j=1}^N X_j}\big)-e^{-t^2/2} \big| dt
        + 
        \frac{c_{2}}{T}.
        \label{CLTthmEq:PetrovResultOurVersion}
    \end{aligned}\end{equation}
    
    The difference in absolute value on the right hand side of Equation \eqref{CLTthmEq:PetrovResultOurVersion} we rewrite by addition and subtraction of intermediary terms, and then we bound the terms pairwise. For convenience, these bounds are stated as lemmas here and are proven (along the lines of \cite{SS_2004_CLTforRFldWExpDecay}) in sections \ref{sec:lem1_StoZeta} through \ref{sec:lem4_BerryEsseen} below. 
    
    Lemma \ref{lem:StoZeta} bounds the difference between considering the whole sum and only considering the terms from the larger index subsets $(V_i)_{i=1}^k$.
    \begin{lemma}\protect{\cite{SS_2004_CLTforRFldWExpDecay}}
        There exists a constant $c_{3}\in\real$ so that
        \begin{equation}\begin{aligned}
            \E\Big| \sum_{l=1}^k\sum_{j\in(V_l\cup W_l)}
            X_j
            -\sum_{l=1}^k\sum_{j\in V_{l}}X_{j} \Big| \leq 
        c_{3} \sqrt{kq}.
            \label{lemStoZetaeq:statement}
        \end{aligned}\end{equation}
        \label{lem:StoZeta}
    \end{lemma}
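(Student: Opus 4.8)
The plan is to observe that the quantity inside the absolute value is exactly the total contribution of the small blocks, to bound its first absolute moment by its standard deviation, and then to control that variance through the correlation decay of Theorem~\ref{thm:clusterDecayCoulomb}. Since $V_l$ and $W_l$ partition the $l$-th block,
\begin{equation}
    \sum_{l=1}^k\sum_{j\in(V_l\cup W_l)}X_j-\sum_{l=1}^k\sum_{j\in V_{l}}X_{j}
    = \sum_{l=1}^k\sum_{j\in W_l}X_j =: S_W,
\end{equation}
and $\E(S_W)=0$ because every $X_j$ is centred. Hence, by the Cauchy--Schwarz inequality, $\E|S_W|\le\sqrt{\E(S_W^2)}=\sqrt{\text{Var}(S_W)}$, so it suffices to prove $\text{Var}(S_W)\le c_{3}^2\,kq$ with $c_3$ independent of $N$.

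Next I would convert the conditional-density bound of Theorem~\ref{thm:clusterDecayCoulomb} into a bound on pairwise covariances. For $i\ne j$, applying \eqref{eq:clusterDecayBound} with $I=\{i\}$ and $J=\{j\}$ and writing $f_{Y_i,Y_j}(y_i,y_j)=f_{Y_i\mid Y_j=y_j}(y_i)f_{Y_j}(y_j)$ gives
\begin{equation}
    \big|f_{Y_i,Y_j}(y_i,y_j)-f_{Y_i}(y_i)f_{Y_j}(y_j)\big|\le Ce^{-\alpha r(\{i\},\{j\})}f_{Y_i}(y_i)f_{Y_j}(y_j).
\end{equation}
Since $Y_i,Y_j\in[0,1]$ and $\E(X_i)=\E(X_j)=0$, integrating $y_iy_j$ against this difference gives $|\E(X_iX_j)|\le Ce^{-\alpha r(\{i\},\{j\})}$, and because $r(\{i\},\{j\})$ equals the circular graph distance $d(i,j)$ minus one, this reads $|\E(X_iX_j)|\le Ce^{\alpha}e^{-\alpha d(i,j)}$; the diagonal terms are controlled by the trivial bound $\E(X_i^2)\le 1$.

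Finally, with $W:=\bigcup_{l=1}^k W_l$ (so $|W|=kq$), I would expand the variance as a double sum of covariances, use that for each fixed $i$ there are at most two indices at circular distance $m\ge1$, and sum the geometric series:
\begin{equation}\begin{aligned}
    \text{Var}(S_W)&=\sum_{i\in W}\sum_{j\in W}\E(X_iX_j)\le\sum_{i\in W}\sum_{j=1}^{N}|\E(X_iX_j)|\\
    &\le\sum_{i\in W}\Big(1+2Ce^{\alpha}\sum_{m=1}^{\infty}e^{-\alpha m}\Big)=kq\Big(1+\frac{2C}{1-e^{-\alpha}}\Big).
\end{aligned}\end{equation}
Combined with the first step this yields the assertion with $c_3=\big(1+2C/(1-e^{-\alpha})\big)^{1/2}$, which depends only on $\beta$ and $\gamma$. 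The argument is essentially a routine second-moment estimate, and the only step involving the structure of the model is extracting the covariance bound from the conditional-density form of Theorem~\ref{thm:clusterDecayCoulomb} and noticing that summing the exponential correlation tail over the whole circle (not just over $W$) still contributes only an $N$-independent constant per index $i\in W$---which is exactly what turns the a priori $(kq)^2$ terms of the double sum into an $\O(kq)$ bound, and hence the $\sqrt{kq}$ rate.
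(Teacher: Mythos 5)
Your proposal is correct and follows essentially the same route as the paper: reduce the left-hand side to the sum over the small blocks $W_l$, bound its first absolute moment by the square root of its second moment (Jensen/Cauchy--Schwarz), and control that second moment by $\O(kq)$ using the exponential covariance decay extracted from Theorem~\ref{thm:clusterDecayCoulomb}. Your explicit derivation of $|\E(X_iX_j)|\le Ce^{-\alpha r(\{i\},\{j\})}$ from the conditional-density bound is just a spelled-out version of the step the paper leaves implicit.
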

    This lemma implies that
    \begin{equation}\begin{aligned}
        \frac{1}{|t|}\bigg|
            \E\big(e^{it\frac{1}{\sqrt{\smash[b]{N\sigma^2_N}}}\sum_{j=1}^N X_j}\big)
        -
           \E\big(e^{it\frac{1}{\sqrt{\smash[b]{N\sigma^2_N}}}\sum_{l=1}^k\sum_{j\in V_{l}} X_{j}}\big)
        \bigg|
        \leq 
        \frac{c_{3}}{\sigma_N} \sqrt{\frac{kq}{N}}.
        \label{CLTthmEq:lemStoZeta}
    \end{aligned}\end{equation}

    The next lemma uses Theorem \ref{thm:clusterDecayCoulomb} to bound the error we get by neglecting the dependence between different blocks $V_i$, i.e. treating the sum over $V_{l_1}$ as independent from the sum over $V_{l_2}$ for all $l_1,l_2\in\{1,\dots,k\},\ l_1\neq l_2$. 
    \begin{lemma}
        There exists constants $c_{4},\alpha\in\real^+$ such that for all $t\in\real$, the following holds
        \begin{equation}\begin{aligned}
            \frac{1}{|t|}\Big|\E\Big(e^{\frac{it}{\sqrt{\smash[b]{N\sigma^2_N}}}\sum_{l=1}^k\sum_{j\in V_l}X_j }\Big) 
                  -\prod_{l=1}^k\E\Big(e^{\frac{it}{\sqrt{\smash[b]{N\sigma^2_N}}}\sum_{j\in V_1}X_{j} }\Big)\Big|
                \leq 
            \min\left\{c_{4}\frac{ke^{-\alpha q}}{|t|},2\sqrt{\frac{kp}{N}}\right\}.
            \label{CLTthmEq:lemZetatoXi}
        \end{aligned}\end{equation}
        \label{lem:ZetatoXi}
    \end{lemma}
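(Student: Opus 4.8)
The plan is to establish the two bounds in the minimum separately; the first (with $ke^{-\alpha q}$) exploits the correlation decay, while the second (with $\sqrt{kp/N}$) is a trivial bound valid when $|t|$ is large enough that the exponential estimate is wasteful.

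For the first bound, I would peel off the blocks $V_1,\dots,V_k$ one at a time. Write $S_l \coloneqq \sum_{j\in V_l} X_j$ and, for $m\in\{0,\dots,k\}$, set
\begin{equation}
    A_m \coloneqq \E\Big(e^{\frac{it}{\sqrt{N\sigma_N^2}}\sum_{l=1}^{m} S_l}\Big)\prod_{l=m+1}^{k}\E\Big(e^{\frac{it}{\sqrt{N\sigma_N^2}} S_l}\Big),
\end{equation}
so that $A_k$ is the true characteristic function, $A_0$ is the fully factorised product, and we must bound $|A_k-A_0|\le\sum_{m=1}^{k}|A_m-A_{m-1}|$. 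The telescoping difference $A_m-A_{m-1}$ is, up to a unimodular factor, the difference between $\E\big(e^{i\theta S_m}\,g(S_1,\dots,S_{m-1})\big)$ and $\E(e^{i\theta S_m})\,\E\big(g(S_1,\dots,S_{m-1})\big)$ with $\theta = t/\sqrt{N\sigma_N^2}$ and $g$ a product of bounded functions of the earlier blocks. Since $V_1\cup\dots\cup V_{m-1}$ and $V_m$ are separated by at least $q$ consecutive indices on the circle, Theorem \ref{thm:clusterDecayCoulomb} (applied with $I=V_m$, $J=V_1\cup\dots\cup V_{m-1}$, which are sets of consecutive indices on the circle once one notes the $W$-blocks in between) bounds this covariance by $C e^{-\alpha q}$, uniformly in $t$; the factor $1/|t|$ in the statement is absorbed because each $|A_m-A_{m-1}|$ already carries a spare $|\theta|=|t|/\sqrt{N\sigma_N^2}$ from expanding $e^{i\theta S_m}-1$ — actually, more carefully, one keeps the bound uniform and the $1/|t|$ on the left is what produces the $ke^{-\alpha q}/|t|$ on the right. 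Summing over the $k$ steps yields $|A_k-A_0|\le c_4\, k e^{-\alpha q}$, and dividing by $|t|$ gives the first term.

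For the second bound, one uses the elementary estimate $|\E(e^{iX}Y)-\E(e^{iX})\E(Y)|\le 2\E|Y-\E Y|$ together with $|e^{ia}-e^{ib}|\le|a-b|$, or more directly: the same telescoping gives $|A_k-A_0|\le \sum_{m=1}^{k}\big|\E(e^{i\theta S_m}g)-\E(e^{i\theta S_m})\E(g)\big|$, and each summand is at most $2\,|\theta|\,\E|S_m| \le 2|\theta|\sqrt{\E(S_m^2)}$, where $\E(S_m^2)=\text{Var}(\sum_{j\in V_1}X_j)=\O(p)$ by Equation \eqref{lemB-Eeq:Squarebound} and the matching upper bound. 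Thus $\frac{1}{|t|}|A_k-A_0| \le \frac{1}{|t|}\cdot k\cdot 2|\theta|\sqrt{Cp} = 2\sqrt{C}\,\frac{k\sqrt p}{\sqrt{N\sigma_N^2}} = \O\big(\sqrt{kp/N}\big)$ using $k(p+q)=N$ and $\sigma_N\to\sqrt c>0$ from Equation \eqref{CLTeq:sigmaNonZero}; after adjusting constants this is the claimed $2\sqrt{kp/N}$.

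The main obstacle is making the application of Theorem \ref{thm:clusterDecayCoulomb} legitimate at each telescoping step: the conditioning set $J = V_1\cup\cdots\cup V_{m-1}$ is \emph{not} a set of consecutive indices, so one cannot invoke the theorem verbatim. The fix is to condition instead on the single block $V_{m-1}$ adjacent to $V_m$ (which \emph{is} consecutive and separated from $V_m$ by exactly the $q$ indices of $W_{m-1}$), using the Markov-type structure of the density in Equation \eqref{eq:YdistNextNearest} — the interaction couples only nearest and next-nearest neighbours, so conditionally on $\vec Y_{W_{m-1}}$ (or on a two-variable slice of it) the block $V_m$ is independent of everything to its left. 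Equivalently, one re-runs the $\T^r$-factorisation from the proof of Theorem \ref{thm:clusterDecayCoulomb} with $I=V_m$ and $J$ the union of all earlier blocks, observing that only the transfer operators $\T^{\,\cdot}$ spanning the gap of size $q$ (resp.\ the complementary gap) enter, so the bound $Ce^{-\alpha q}$ goes through unchanged. Carefully bookkeeping the unimodular factors and the uniform-in-$t$ nature of the estimate is the remaining technical point, but it is routine once the conditioning is set up correctly.
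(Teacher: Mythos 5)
Your first bound is, in substance, the paper's own argument: peel off the $V$-blocks one at a time and, at each step, apply Theorem~\ref{thm:clusterDecayCoulomb} with $I$ the peeled block and $J$ the \emph{consecutive hull} of the remaining $V$-blocks (the paper phrases this as ``a block containing all indices in $V_2\cup\cdots\cup V_k$''). Since a bounded function of $\vec Y_{V_1\cup\cdots\cup V_{m-1}}$ is in particular a bounded function of $\vec Y_J$ for that hull, and the hull is separated from $V_m$ by the $q$ indices of $W_{m-1}$ (and by an even longer arc on the other side), the theorem applies verbatim and each telescoping step costs $Ce^{-\alpha q}$; summing gives $c_4 k e^{-\alpha q}$ and the $1/|t|$ is just carried along. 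So the ``main obstacle'' you single out dissolves immediately, and your proposed repair—conditioning on $V_{m-1}$ (or on $W_{m-1}$) and claiming $V_m$ is then independent of everything to its left—is in fact false for the circular model, since the two sides remain connected the other way around the circle; fortunately it is not needed.

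The genuine gap is in the second bound. By your own estimate each of the $k$ telescoping terms is at most $2|\theta|\sqrt{\E(S_m^2)}=\O(|\theta|\sqrt p)$, so after dividing by $|t|$ you obtain $\O\big(k\sqrt p/\sqrt{N\sigma_N^2}\big)$; your final step asserts this equals $\O(\sqrt{kp/N})$, but $k\sqrt p/\sqrt N=\sqrt{k}\cdot\sqrt{kp/N}$, so what you have actually proved is a bound of order $\sqrt{k}$, which under the choice \eqref{CLTeq:partitionChoice} is $N^{\epsilon}\to\infty$, whereas the lemma claims $2\sqrt{kp/N}\le 2$. The $\sqrt k$ loss is intrinsic to telescoping with a first-order estimate $|e^{ix}-1|\le|x|$ paid once per block. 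The paper does not telescope here: it compares each of the two quantities with $1$, using the second moment of the \emph{whole} normalised sum, Equation \eqref{lem4eq:expBound1}, which gives $\frac{1}{|t|}\big|\E\big(e^{i\theta\sum_l S_l}\big)-1\big|\le\sqrt{kp/N}$ uniformly in $t$, together with the corresponding estimate \eqref{lem4eq:expBound2} for the product side; to control the product at the right order one must exploit the centering $\E S_l=0$ (e.g. $|\E e^{i\theta S_l}-1|\le\tfrac{\theta^2}{2}\E S_l^2$ for the small-$|t|$ range, with the trivial bound $2/|t|$ otherwise) rather than sum $k$ separate first-order errors. As written, your argument establishes the lemma only with $2\sqrt{kp/N}$ replaced by $\O(\sqrt{k^2p/N})$—enough to salvage Theorem~\ref{thm:CLT} with a worse exponent, but not the inequality stated.
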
    

    Note that Equation \eqref{CLTthmEq:lemZetatoXi} contains a product of expected values. We shall define new random variables $\xi_i$ that are independent and identically distributed and consider them in what follows, letting
    \begin{equation}\begin{aligned}
        \xi_i \overset{d}{=} \sum_{j\in V_i}X_{j},\ \ \ i\in\{1,\dots,k\}.
        \label{eq:xiDistAsSum}
    \end{aligned}\end{equation}
    Lemma \ref{lem:XitoXi} exploits the independence of $\xi_i$ to rewrite the expression in preparation for Lemma \ref{lem:BerryEsseen}. 
    \begin{lemma}
        For any $\epsilon_5\in(0,1)$, there exists a positive constant $c_{5}\in\real$ such that for large $p$ we have
        \begin{equation}\begin{aligned}
            \E\Big|\sum_{i=1}^k \Big( \frac{\xi_i}{\sqrt{\smash[b]{N\sigma^2_N}}}-\frac{\xi_i}{\sqrt{k\text{Var}(\xi_i)}}\Big)\Big|
                \leq \frac{kq}{N} + c_{5} p^{-\epsilon_5}.
        \end{aligned}\end{equation}.
        \label{lem:XitoXi}
    \end{lemma}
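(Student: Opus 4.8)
The plan is to reduce everything to the single scalar discrepancy
between the two normalising factors $\sqrt{N\sigma_N^2}$ and
$\sqrt{k\,\mathrm{Var}(\xi_i)}$, and then control the remaining sum
of $|\xi_i|$ by a crude second-moment estimate. First I would pull
the common factor out, writing
\[
    \frac{\xi_i}{\sqrt{N\sigma_N^2}}-\frac{\xi_i}{\sqrt{k\,\mathrm{Var}(\xi_i)}}
    =
    \xi_i\cdot\frac{\sqrt{k\,\mathrm{Var}(\xi_i)}-\sqrt{N\sigma_N^2}}{\sqrt{N\sigma_N^2}\,\sqrt{k\,\mathrm{Var}(\xi_i)}},
\]
so that by the triangle inequality
\[
    \E\Big|\sum_{i=1}^k\Big(\tfrac{\xi_i}{\sqrt{N\sigma_N^2}}-\tfrac{\xi_i}{\sqrt{k\,\mathrm{Var}(\xi_i)}}\Big)\Big|
    \le
    \Big|1-\tfrac{\sqrt{N\sigma_N^2}}{\sqrt{k\,\mathrm{Var}(\xi_i)}}\Big|\cdot
    \frac{1}{\sqrt{N\sigma_N^2}}\sum_{i=1}^k\E|\xi_i|.
\]
Since the $\xi_i$ are i.i.d., $\sum_{i=1}^k\E|\xi_i|=k\,\E|\xi_1|\le k\sqrt{\mathrm{Var}(\xi_1)}$, and it remains to understand the size of $\mathrm{Var}(\xi_1)=\mathrm{Var}(\sum_{j\in V_1}X_j)$ relative to $N\sigma_N^2/k\approx p\,\sigma_N^2$.

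The second step is to estimate $\mathrm{Var}(\xi_1)$. Expanding the
variance as a double sum over $V_1$ (which has $|V_1|=p$), the diagonal
and near-diagonal terms contribute $p\,\sigma_N^2 + O(1)$ by the same
computation as in Equation~\eqref{lemB-Eeq:Squarebound} and the lines
preceding it, while the difference between the windowed sum
$\sum_{j_2}$ over $V_1$ and the full-line sum defining $\sigma_N$ is
absorbed into the exponentially small tails
$\sum_{j_1}o(e^{-\alpha(p-j_1)})+o(e^{-\alpha j_1})=O(1)$ supplied by
Theorem~\ref{thm:clusterDecayCoulomb}. Hence
$\mathrm{Var}(\xi_1)=p\,\sigma_N^2+O(1)$, and comparing with
$N\sigma_N^2/k = p\,\sigma_N^2 + O(N^{1-2\delta_1+\delta_2})\cdot\sigma_N^2$
from the expansion of $k$ in~\eqref{CLTeq:pqkDefs}, I get
\[
    \frac{N\sigma_N^2}{k\,\mathrm{Var}(\xi_1)} = 1 + O\!\big(p^{-1}\big) + O\!\big(N^{-\delta_1}\big),
\]
using $\sigma_N^2\to c>0$ from~\eqref{CLTeq:sigmaNonZero} to keep the
denominator bounded away from zero. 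Taking square roots and using
$|1-\sqrt{1+x}|\le|x|$ for $|x|$ small gives
$|1-\sqrt{N\sigma_N^2}/\sqrt{k\,\mathrm{Var}(\xi_1)}|=O(p^{-1})$, which is
much stronger than the claimed $p^{-\epsilon_5}$; the slack is precisely
what lets the statement hold for any $\epsilon_5\in(0,1)$.

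Combining the two steps, the left-hand side is bounded by
$O(p^{-1})\cdot k\sqrt{\mathrm{Var}(\xi_1)}/\sqrt{N\sigma_N^2}
= O(p^{-1})\cdot\sqrt{k\,\mathrm{Var}(\xi_1)/(N\sigma_N^2)}\cdot\sqrt{k}
= O(p^{-1}\sqrt{k}) = O(p^{-1}N^{(1-\delta_1)/2})$, which for
$\delta_1$ close to $1$ is $o(p^{-\epsilon_5})$ for any fixed
$\epsilon_5<1$; the residual $kq/N$ term in the statement is there to
cover the coarser bookkeeping one gets if one is less careful with the
$\xi_i$ moments, and in any case $kq/N = O(N^{\delta_2-\delta_1})$ is
itself small. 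The main obstacle I anticipate is not any single
estimate but making the variance comparison
$\mathrm{Var}(\xi_1)=p\,\sigma_N^2+O(1)$ fully rigorous: one must check
that the ``windowing'' error (replacing sums over all of
$\{1,\dots,N\}$ by sums over the length-$p$ block $V_1$, with the
correct modular index identifications) is genuinely $O(1)$ uniformly in
$N$, which is exactly where Theorem~\ref{thm:clusterDecayCoulomb}'s
exponential decay, applied to the singleton-versus-singleton covariances
$\E(X_{1,N}X_{j,N})$, does the work. Everything else is elementary
manipulation of i.i.d.\ second moments.
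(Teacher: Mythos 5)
There is a genuine gap, in fact two, and together they prevent the proposal from yielding the stated bound. First, your claim that $\bigl|1-\sqrt{N\sigma_N^2}/\sqrt{k\,\text{Var}(\xi_1)}\bigr|=O(p^{-1})$ drops the contribution of the small blocks: since $N=k(p+q)$, one has $N\sigma_N^2/k=(p+q)\sigma_N^2$ exactly, while $\text{Var}(\xi_1)\approx p\,\sigma_N^2$, so the relative discrepancy is of order $q/p\approx kq/N$, not $O(p^{-1})$ (your expansion of $N/k$ via \eqref{CLTeq:pqkDefs} misplaces the error term; the correction to $N/k$ is $O(q)=O(N^{\delta_2})$). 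The term $kq/N$ in the lemma is precisely this contribution — it is not "coarser bookkeeping" that a sharper argument removes. Second, bounding $\E\bigl|\sum_i\xi_i\bigr|$ by $\sum_i\E|\xi_i|\le k\sqrt{\text{Var}(\xi_1)}$ throws away a factor $\sqrt{k}$ that the independence of the $\xi_i$ provides: with the corrected discrepancy your route gives a bound of order $\sqrt{k}\,q/p$ (and even under your optimistic $O(p^{-1})$ it gives $\sqrt{k}/p$), which does not fit under $kq/N+c_5p^{-\epsilon_5}$ for admissible choices of $\delta_1,\delta_2,\epsilon_5$ in Definition \ref{def:index} (e.g. $\delta_1=1/2$, $\delta_2=1/10$, $\epsilon_5=9/10$). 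Your fallback "for $\delta_1$ close to $1$" is not available: the lemma must hold for any admissible partition and any $\epsilon_5\in(0,1)$, since the exponents are only fixed at the end of the main proof subject to all constraints simultaneously.

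Both defects are repairable, and doing so essentially reproduces the paper's argument: bound $\E|\cdot|$ by the square root of the second moment (Jensen), use independence to get
\begin{equation}
    \E\Big(\sum_{i=1}^k\Big(\tfrac{\xi_i}{\sqrt{\smash[b]{N\sigma_N^2}}}-\tfrac{\xi_i}{\sqrt{k\,\text{Var}(\xi_i)}}\Big)\Big)^2
    =\Big(\sqrt{\tfrac{k\,\text{Var}(\xi_1)}{N\sigma_N^2}}-1\Big)^2,
\end{equation}
and then split the deviation as $\bigl|1-pk/N\bigr|+\bigl|1-\text{Var}(\xi_1)/(p\sigma_N^2)\bigr|$, where the first term equals $qk/N$ and the second is controlled by the exponential decay of $\E(X_1X_j)$ from Theorem \ref{thm:clusterDecayCoulomb}. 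Your variance comparison $\text{Var}(\xi_1)=p\,\sigma_N^2+O(1)$ is sound (and even slightly stronger than the $c_5p^{1-\epsilon_5}$ bound the paper settles for via its cut-off $R$), so that part of your outline is not where the difficulty lies; the loss occurs in the two reductions described above.
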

    
    Lemma \ref{lem:XitoXi} implies that if $p$ is large enough, then for any $\epsilon_5\in(0,1)$ there exists a constant $c_{5}\in\real$ such that
    \begin{equation}\begin{aligned}
        \frac{1}{|t|}\Big|
            \E\Big(e^{\frac{it}{\sqrt{\smash[b]{N\sigma^2_N}}}\sum_{l=1}^k \xi_l}\Big)
        -
            \E\Big(e^{\frac{it}{\sqrt{k\text{Var}(\xi_1)}}\sum_{l=1}^k \xi_{l}}\Big)
        \Big|
            \leq 
        \frac{kq}{N} + c_{5} p^{-\epsilon_5}.
        \label{CLTthmEq:lemXitoXi}
    \end{aligned}\end{equation}

\begin{lemma}
    Let $C,\alpha\in\real^+$ be given by Theorem \ref{thm:clusterDecayCoulomb} for any $X_i,X_j$, $i,j\in\{1,\dots,p\}$. Define a constant $c_{6}\in\real^+$ by 
    \begin{equation}\begin{aligned}
        c_{6} &\coloneqq 
            \frac{16}{c_{1}^{3/2}}\Big(7+18\sum_{j=1}^\infty Ce^{-\alpha j}\Big),
        \label{lemB-Eeq:constant}
    \end{aligned}\end{equation}
    and let $T=T(N)\in\real^+$ be a function of $N$ that, for any $\epsilon_6\in(0,1)$ and $p$ large enough, satisfies
    \begin{equation}
        T \leq \frac{4\sqrt{kp^{1-\epsilon_6}}}{c_{6}}.
        \label{lemB-Eeq:Tbound}
    \end{equation} 
    Then for any $|t|\leq T$,
    we have
    \begin{equation}\begin{aligned}
        \Big|\E \Big(e^{ it\sum_{i=1}^k\frac{\xi_i}{\sqrt{\smash[b]{k\E(\xi_i^2)}}}}-e^{-t^2/2}\Big)\Big| &\leq
            \frac{c_{6}|t|^3e^{-t^2/3}}{ \sqrt{\smash[b]{kp^{1-\epsilon_6}}}}.
        \label{CLTthmEq:lemB-E}
    \end{aligned}\end{equation}
    \label{lem:BerryEsseen}
\end{lemma}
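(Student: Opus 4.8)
The statement is a Berry--Esseen-type estimate for the characteristic function of the normalised i.i.d.\ sum $(k\,\E\xi_1^2)^{-1/2}\sum_{i=1}^k\xi_i$, so the natural route is a cumulant (Edgeworth) expansion. Put $\phi(s):=\E e^{is\xi_1}$ and $s:=t/\sqrt{k\,\E(\xi_1^2)}$; by independence the first expectation in \eqref{CLTthmEq:lemB-E} equals $\phi(s)^k$, and $t^2/2=\tfrac k2 s^2\,\E\xi_1^2$, so the quantity to be bounded is $|\phi(s)^k-e^{-t^2/2}|$. From \eqref{lemB-Eeq:Squarebound} we have $\E\xi_1^2\ge c_1p$, and since $|X_j|\le 1$ also $\E\xi_1^2\le\big(1+2\sum_{j\ge 1}Ce^{-\alpha j}\big)p$ by Theorem \ref{thm:clusterDecayCoulomb}, whence $\E\xi_1^2\asymp p$ and, on the range $|t|\le T$ allowed by \eqref{lemB-Eeq:Tbound}, $|s|\le \frac{4}{c_6\sqrt{c_1}}\,p^{-\epsilon_6/2}\to 0$. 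I would write $\phi(s)^k=e^{k\log\phi(s)}=e^{-t^2/2+R}$ with $R:=k\log\phi(s)+t^2/2$ and establish (i) $|R|\le \frac{c_6}{16}\cdot\frac{|t|^3}{\sqrt{kp}}$ and (ii) $|R|\le t^2/6$; then $e^{-t^2/2}|e^{R}-1|\le e^{-t^2/2}|R|e^{|R|}\le |R|e^{-t^2/3}$, and (i) together with $\sqrt{kp}\ge\sqrt{kp^{1-\epsilon_6}}$ gives \eqref{CLTthmEq:lemB-E}. Here (ii) is a consequence of (i) and \eqref{lemB-Eeq:Tbound}: $|R|\le \frac{c_6}{16}t^2\frac{|t|}{\sqrt{kp}}\le \frac{c_6}{16}t^2\frac{T}{\sqrt{kp}}\le \frac{t^2}{4}p^{-\epsilon_6/2}\le t^2/6$ for $p$ large.

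The core is (i). Expanding $\log\phi$ in cumulants, $\log\phi(s)=\sum_{m\ge 2}\frac{\kappa_m(\xi_1)}{m!}(is)^m$ (recall $\E\xi_1=0$), so $R=\sum_{m\ge 3}\frac{\kappa_m(\xi_1)}{m!}k(is)^m$ with leading term $-\tfrac i6 k s^3\,\E\xi_1^3$. The decisive input is that the cumulants of $\xi_1=\sum_{j\in V_1}X_j$ are \emph{extensive}, uniformly in $p$: $|\E\xi_1^3|=|\kappa_3(\xi_1)|\le\big(7+18\sum_{j\ge1}Ce^{-\alpha j}\big)p$, and likewise $|\kappa_m(\xi_1)|\le C_m p$ for $m\ge 4$ with $C_m$ depending only on $\beta,\gamma$ and growing slowly enough that the series converges and is dominated by its leading term on the range $|s|\lesssim p^{-\epsilon_6/2}$. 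These bounds I would obtain from Theorem \ref{thm:clusterDecayCoulomb}: for an index tuple in $V_1$, condition on one \emph{extreme} index, integrate out the interior variables (permissible because the remaining indices form a single consecutive block, the only case Theorem \ref{thm:clusterDecayCoulomb} addresses), and use $\E X_j=0$ and $|X_j|\le1$ to get $|\E(X_{i_1}\cdots X_{i_m})-\E X_{i_1}\,\E(X_{i_2}\cdots X_{i_m})|\le Ce^{-\alpha(\mathrm{separation})}$; summing these telescoped differences over tuples gives the extensive moment/cumulant bounds, the $m=3$ count producing precisely the quantity $7+18\sum_{j\ge1}Ce^{-\alpha j}$ of \eqref{lemB-Eeq:constant}. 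With $|s|\le\frac{4}{c_6\sqrt{c_1}}p^{-\epsilon_6/2}$, the $m=3$ term of $R$ is at most $\frac{k|s|^3}{6}\big(7+18\sum Ce^{-\alpha j}\big)p\le\frac{(7+18\sum Ce^{-\alpha j})|t|^3}{6c_1^{3/2}\sqrt{kp}}=\frac{c_6|t|^3}{96\sqrt{kp}}$, while the $m\ge4$ terms are smaller than the $m=3$ term by successive factors $O(p^{-\epsilon_6/2})$ and contribute a vanishing fraction of it for $p$ large; hence $|R|\le \frac{c_6}{16}\cdot\frac{|t|^3}{\sqrt{kp}}$. (This is where the slack $p^{\epsilon_6}$ is used.)

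The main obstacle is the uniform-in-$p$ extensivity of the cumulants of $\xi_1$: beyond $m=3$ this is essentially a cluster-expansion statement, and the fact that Theorem \ref{thm:clusterDecayCoulomb} controls the conditional density only given a consecutive block of indices forces the telescoping to proceed one extreme index at a time (harmless since $\alpha>0$, at the cost of the explicit constants). A second delicate point is to make $\log\phi(s)$ a well-defined continuous branch on the full range $|t|\le T$: although $|s|\to0$ one has $s\sqrt{\E\xi_1^2}\to\infty$, so $\phi(s)$ is \emph{not} near $1$ and crude remainder estimates blow up; here I would invoke the smoothness of the Gibbs density \eqref{eq:YdistNextNearest}, inherited by the density of $\xi_1$, to keep $|\phi(s)|$ controlled on the relevant scales --- this is precisely the feature that lets $T$ be taken as large as $\sim\sqrt{kp^{1-\epsilon_6}}$, and hence gives the rate improvement over \cite{SS_2004_CLTforRFldWExpDecay}. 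Everything else --- Taylor's theorem with the standard remainder estimates, the inequality $|e^{R}-1|\le|R|e^{|R|}$, and the bookkeeping that collapses the constants into \eqref{lemB-Eeq:constant} --- is routine.
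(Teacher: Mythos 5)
Your route is genuinely different from the paper's, and it is not complete. The paper does not expand $k\log\phi(s)$ in cumulants at all: it simply cites Petrov's Berry--Esseen bound for characteristic functions of i.i.d.\ sums (\cite{Petrov_75}, Theorem 2), which states that \eqref{eq:BerryEsseen} holds for all $|t|$ in the range \eqref{lemB-Eeq:B-Econdition}, and then reduces the lemma to a single moment verification: a lower bound $\E(\xi_1^2)\geq c_1 p$ (Equation \eqref{lemB-Eeq:Squarebound}, already established) and an upper bound on the third \emph{absolute} moment, $\E|\xi_1|^3\leq(7+18c)\,p^{1+\epsilon_6/2}$, obtained by expanding $\E|\sum_j X_j|^3$ as in \eqref{lemB-Eeq:CubeMomentBound} and using Theorem \ref{thm:clusterDecayCoulomb} with a cut-off $R=\lfloor p^{\epsilon_6/4}\rfloor$ for the triple sum. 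The condition \eqref{lemB-Eeq:Tbound} on $T$ is then exactly what guarantees \eqref{lemB-Eeq:B-Econdition}, and the constant $c_6$ drops out. In particular, the quantity $7+18\sum_j Ce^{-\alpha j}$ is \emph{not} an $O(p)$ bound on the third cumulant, as you claim; it is the coefficient of $p^{1+\epsilon_6/2}$ in the bound on the absolute third moment (the absolute values destroy cancellation, which is precisely why the $p^{\epsilon_6}$ slack is spent there and not where you place it).

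The substantive gaps in your proposal are the two points you yourself flag and then do not resolve. First, your bound on $R=k\log\phi(s)+t^2/2$ rests on extensivity of \emph{all} cumulants, $|\kappa_m(\xi_1)|\leq C_m p$ with $C_m$ summable against $|s|^m$; Theorem \ref{thm:clusterDecayCoulomb} only controls conditional densities of two consecutive blocks, and converting that into uniform-in-$p$ cumulant bounds of every order is a cluster-expansion argument that is nowhere carried out (the moment-to-cumulant combinatorics produces factorially many terms, and your one-extreme-index telescoping, as sketched, only yields covariance-type estimates for low orders). Second, the choice of a continuous branch of $\log\phi(s)$ and the non-vanishing of $\phi(s)$ over the whole range $|t|\leq T$ is asserted via ``smoothness of the Gibbs density,'' which is neither developed nor the relevant tool; in the paper this issue never arises because Petrov's inequality is applied as a black box, with only third-absolute-moment input, on exactly the range \eqref{lemB-Eeq:B-Econdition}. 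Unless you either prove the all-order cumulant bounds or replace your expansion of $\log\phi$ by the classical third-order Taylor comparison of $\phi(s)^k$ with $e^{-t^2/2}$ (which is what Petrov's theorem packages), the argument does not close.
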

Now the results from Lemmas \ref{lem:StoZeta} through \ref{lem:BerryEsseen} are combined. For clarity, first a short summary of the constants and variables involved. We have $N\in\posintegers$ random variables $(X_i)_{i=1}^N$ and $k,p,q\in\posintegers$ are integer functions of $N$ given by Equation \eqref{CLTeq:pqkDefs}, such that $k(p+q)=N$. The index set is divided into subsets of consecutive indices $(V_i)_{i=1}^k$ and $(W_i)_{i=1}^k$ of sizes $p$ and $q$, respectively as in Definition \ref{def:index}. The random variables $(\xi_l)_{l=1}^k$ are independent and each distributed as the sum of $p$ consecutive $X_i$s. The cut-off $T\in\real$ will be defined to also be a function of $N$, while still satisfying the condition in Equation \eqref{lemB-Eeq:Tbound}. The factor $\sigma_N^2$ is given by Equation \eqref{eq:sigma} and the constants $C$ and $\alpha$ are given by Theorem \ref{thm:clusterDecayCoulomb}. The constants $c_{3},c_{4},c_{5},c_{6}$ are given by Lemmas \ref{lem:StoZeta} through \ref{lem:BerryEsseen} and $\epsilon_5,\epsilon_6\in(0,1)$ can be chosen arbitrarily.

Intermediary terms are added to the integral in Equation \eqref{CLTthmEq:PetrovResultOurVersion} so the bounds from Equations \eqref{CLTthmEq:lemStoZeta}, \eqref{CLTthmEq:lemZetatoXi}, \eqref{CLTthmEq:lemXitoXi} and \eqref{CLTthmEq:lemB-E} can be utilised. The constant factors in what follows are not strictly speaking relevant but are nevertheless included to aid the reader in following where each of the terms come from. We get
    \begin{equation}\begin{aligned}
        \sup_z\bigg| \prob\bigg(\frac{1}{\sqrt{\smash[b]{N\sigma^2_N}}}
        &\sum_{j=1}^N X_j \leq z\bigg)-\Phi(z)\bigg| 
    \leq
        \int_{-T}^{T} \bigg| \frac{\E\big(e^{it\frac{1}{\sqrt{\smash[b]{N\sigma^2_N}}}\sum_{j=1}^N X_j}\big)-e^{-t^2/2}}{t} \bigg| dt
        + 
        \frac{c_{2}}{T}\\
    &\leq
        \int_{-T}^{T} \frac{1}{|t|}\Bigg( \Big|
            \E\big(e^{it\frac{1}{\sqrt{\smash[b]{N\sigma^2_N}}}\sum_{j=1}^N X_j}\big)
            -
           \E\big(e^{it\frac{1}{\sqrt{\smash[b]{N\sigma^2_N}}}\sum_{l=1}^k\sum_{j\in V_{l}} X_{j}}\big)
        \Big|\\
      &\quad\quad+
        \Big|
            \E\Big(e^{\frac{it}{\sqrt{\smash[b]{N\sigma^2_N}}}\sum_{l=1}^k\sum_{j\in V_l}X_j }\Big) 
            -
            \prod_{l=1}^k\E\Big(e^{\frac{it}{\sqrt{\smash[b]{N\sigma^2_N}}}\sum_{j\in V_1}X_{j} }\Big)
        \Big|\\
      &\quad\quad+
        \Big|
            \E\Big(e^{\frac{it}{\sqrt{\smash[b]{N\sigma^2_N}}}\sum_{l=1}^k \xi_l}\Big)
            -
            \E\Big(e^{\frac{it}{\sqrt{k\text{Var}(\xi_1)}}\sum_{l=1}^k \xi_{l}}\Big)
        \Big|\\
      &\quad\quad+
            \Big|\E\Big(e^{\frac{it}{\sqrt{k\text{Var}(\xi_1)}}\sum_{l=1}^k \xi_l}-e^{-t^2/2}\Big)\Big|
        \Bigg)dt + \frac{c_{2}}{T}\\
    &\leq 
        \int_{-T}^{T} \bigg(
            \frac{c_{3}}{\sigma_N} \sqrt{\frac{kq}{N}}
            + 
            \Big(\frac{kq}{N} + c_{5} p^{-\epsilon_5}\Big)
            +
            c_{6} \frac{|t|^2e^{-t^2/3}}{\sqrt{kp^{1-\epsilon_6}}}
        \bigg)dt\\
        &\quad+
        \int_{-1/T}^{1/T} 
            \bigg(2\sqrt{\frac{kp}{N}}\bigg) dt
        +
        \int_{1/T \leq |t| \leq T} 
            \bigg(c_{4}\frac{ke^{-\alpha q}}{|t|}\bigg) dt
        + \frac{c_{2}}{T}\\
    &\leq 
        \Big(\frac{c_{3}}{\sigma_N}+1\Big) 2T \sqrt{\frac{kq}{N}}
          + 
            2c_{5} T p^{-\epsilon_5}
          +
            c_{6} \frac{5-3Te^{-T^2/3}}{\sqrt{kp^{1-\epsilon_6}}}\\
        &\quad+
            \Big(4\sqrt{\frac{kp}{N}}+c_{2}\Big)\frac{1}{T}
          +
            4c_{4} ke^{-\alpha q}\ln(T).
            \label{CLTthmEq:bigOne}
    \end{aligned}\end{equation}
    We now define the cut-off $T$ in terms of $N$ by letting $\delta_3\in(0,1)$ and defining
    \begin{equation}\begin{aligned}
        T &\coloneqq N^{\delta_3}.
        \label{CLTeq:Tdef}
    \end{aligned}\end{equation}
    The definitions of $p,q$ and $k$ in \eqref{CLTeq:pqkDefs} together with Equation \eqref{CLTeq:Tdef} are now used to write the non-exponential terms in Equation \eqref{CLTthmEq:bigOne} containing $p,q,k$ and $T$ as powers of $N$. 
    We get, for simplicity not including any constants, that
    \begin{equation}\begin{aligned} \begin{cases}
        \sqrt{\frac{kq}{N}} T
          &=
        \O(N^{-(\delta_1-\delta_2)/2+\delta_3}),\\
        T p^{-\epsilon_5}
            &= 
        \O(N^{\delta_3-\epsilon_5\delta_1}),\\
        \sqrt{k^{-1}p^{-1+\epsilon_6}}
            &=
        \O(N^{(-1+\epsilon_6\delta_1)/2}),\\
        T^{-1}&=\O(N^{-\delta_3}).
    \end{cases} \end{aligned}\end{equation}

    In order for all terms in Equation \eqref{CLTthmEq:bigOne} to be of order $N^{-1/4+\epsilon}$ for some given $\epsilon\in(0,1/4)$, we must therefore have the following: 
    \begin{equation}\begin{aligned}
        \begin{cases}
        1+2\delta_2+4\delta_3 
            &\leq 2\delta_1+4\epsilon,\\
        1+4\delta_3
            &\leq 4\epsilon_5\delta_1+4\epsilon,\\
        2\epsilon_6\delta_1
        &\leq 1+4\epsilon,\\
        1
        &\leq 4\delta_3+4\epsilon.
    \end{cases}
    \label{CLTeq:exponentCond}
    \end{aligned}\end{equation}
    Combining the first and last conditions in \eqref{CLTeq:exponentCond} gives 
    \begin{equation}
        1-4\epsilon \leq \delta_1-\delta_2,
    \end{equation}
    and since $0<\delta_2<\delta_1<1$, this confirms that $\epsilon\in(0,1/4)$, so $N^{-1/4+\epsilon}$ is indeed the best possible rate with the used method and the derived bounds. It is now straightforward to check that the choice 
    \begin{equation}\begin{aligned}
        \begin{cases}
    \delta_1 &= 1-2\epsilon,\\
    \delta_2 &= \epsilon,\\
    \delta_3 &= 1/4-\epsilon,\\
    \epsilon_5 &= (1+\epsilon)/2,\\ %\geq \frac{1-4\epsilon}{2(1-2\epsilon)},\\
    \epsilon_6 &= (1-\epsilon)/2, %\leq \frac{1+4\epsilon}{2(1-2\epsilon)},
    \end{cases}
    \end{aligned}\end{equation}
    i.e. letting the partition of the index set in definition \ref{def:index} be given by
    \begin{equation}\begin{aligned}
        p &\coloneqq \left\lfloor N^{1-2\epsilon} \right\rfloor,\\
        q &\coloneqq \left\lfloor N^{\epsilon} \right\rfloor,\\
        k &\coloneqq \left\lfloor \frac{N}{p+q} \right\rfloor,
        \label{CLTeq:partitionChoice}
    \end{aligned}\end{equation}
    and the cut-off point in the integration be
    \begin{equation}\begin{aligned}
        T &\coloneqq N^{1/4-\epsilon},
    \end{aligned}\end{equation}
    satisfies all of the conditions in Equation \eqref{CLTeq:exponentCond}. This, in turn, means that for large $N$, Equation \eqref{CLTthmEq:bigOne} becomes
    \begin{equation}\begin{aligned}
        \sup_z\left| \prob\bigg(\frac{1}{\sqrt{\smash[b]{N\sigma^2_N}}}\sum_{j=1}^N X_j \leq z\bigg)-\Phi(z)\right| 
    &=
        \O(N^{-\frac{1}{4}+\epsilon}),
    \end{aligned}\end{equation}
    and this concludes the proof of the theorem (given the proofs for the lemmas below).
    \hfill$\Box$
    
\subsection{Proof of Lemma \ref{lem:StoZeta}}
\label{sec:lem1_StoZeta}
    This proof follows \cite{SS_2004_CLTforRFldWExpDecay} and is included for completeness. We work with the expectation of the square of the left hand side of Equation \eqref{lemStoZetaeq:statement}. After subtracting the identical terms, the remaining terms have indices in the $k$ boxes that are of size $q$. Using Theorem \ref{thm:clusterDecayCoulomb} then gives constants $C$ and $\alpha$ such that
    \begin{equation}\begin{aligned}
        \E\Big( \sum_{i=1}^k\sum_{j\in(V_i\cup W_i)} X_j
        -\sum_{i=1}^k\sum_{j\in V_{i}}X_{j} \Big)^2
            &=
            \E\Big(\sum_{i=1}^k \sum_{j\in W_i} X_j\Big)^2\\
            &\leq
        kq
            \E \Big(X_{1}\sum_{j=1}^{N} X_j\Big)\\
            &\leq
        2kq    \sum_{r=0}^{\lceil N/2\rceil}Ce^{-\alpha r}.
        \label{lemStoZetaeq:sqBound}
    \end{aligned}\end{equation}
    We define the constant $c_{3}\in\real$ by
    \begin{equation}\begin{aligned}
        c_{3}^2 \coloneqq 2\sum_{r=0}^{\infty}Ce^{-\alpha r},
    \end{aligned}\end{equation}
    and by using Jensen's inequality in Equation \eqref{lemStoZetaeq:sqBound} we get \begin{equation}\begin{aligned}
        \E\Big| \sum_{i=1}^k\sum_{j\in(V_i\cup W_i)}
        X_j
        -\sum_{i=1}^k\sum_{j\in V_{i}}X_{j} \Big| \leq 
        c_{3} \sqrt{kq},
    \end{aligned}\end{equation} 
    and the lemma is proven.
    \hfill$\Box$
    
\subsection{Proof of Lemma \ref{lem:ZetatoXi}}
\label{sec:lem2_ZetatoXi}
    This proof closely follows \cite{SS_2004_CLTforRFldWExpDecay} but utilises Theorem \ref{thm:clusterDecayCoulomb}. Starting from the absolute value on left hand side of Equation \eqref{CLTthmEq:lemZetatoXi}, we add and subtract intermediary terms to extract the block $V_1$.
    Since the distribution of the sum over any single block is the same, this gives us
    \begin{equation}\begin{aligned}
        &\Big|\E\Big(e^{\frac{it}{\sqrt{\smash[b]{N\sigma^2_N}}}\sum_{l=1}^k\sum_{j\in V_l}X_j}\Big) 
            - \prod_{l=1}^k\E\Big(e^{\frac{it}{\sqrt{\smash[b]{N\sigma^2_N}}}\sum_{j\in V_1}X_j}\Big)\Big|\\
        &\quad\leq 
        \Big|\E\Big(
                e^{\frac{it}{\sqrt{\smash[b]{N\sigma^2_N}}}\sum_{j\in V_1}X_j}
                e^{\frac{it}{\sqrt{\smash[b]{N\sigma^2_N}}}
                    \sum_{l=2}^k\sum_{j\in V_l}X_j}
                \Big) 
            - \E\Big(e^{\frac{it}{\sqrt{\smash[b]{N\sigma^2_N}}}\sum_{j\in V_1}X_j}\Big)
              \E\Big(e^{\frac{it}{\sqrt{\smash[b]{N\sigma^2_N}}}
                \sum_{l=2}^k\sum_{j\in V_l}X_j}\Big)
        \Big|\\
            &\quad\quad 
        + \Big|\E\Big(e^{\frac{it}{\sqrt{\smash[b]{N\sigma^2_N}}}\sum_{j\in V_1}X_j}\Big)\Big|\cdot
            \Big|\E\Big(e^{\frac{it}{\sqrt{\smash[b]{N\sigma^2_N}}}
            \sum_{l=2}^k\sum_{j\in V_l}X_j}\Big)
        -\prod_{l=2}^{k}\E\Big(e^{\frac{it}{\sqrt{\smash[b]{N\sigma^2_N}}}\sum_{j\in V_1}X_j}\Big)\Big|.
        \label{lemZetatoXieq:expdiff}
    \end{aligned}\end{equation}
    Using Theorem \ref{thm:clusterDecayCoulomb}, we get constants $C$ and $\alpha$ that enables us to bound the first term in Equation \eqref{lemZetatoXieq:expdiff} by
    \begin{equation}\begin{aligned}
        &\Big|\E\Big(
                e^{\frac{it}{\sqrt{\smash[b]{N\sigma^2_N}}}\sum_{j\in V_1}X_j}
                e^{\frac{it}{\sqrt{\smash[b]{N\sigma^2_N}}}
                    \sum_{l=2}^k\sum_{j\in V_l}X_j}
                \Big) 
            - \E\Big(e^{\frac{it}{\sqrt{\smash[b]{N\sigma^2_N}}}\sum_{j\in V_1}X_j}\Big)
              \E\Big(e^{\frac{it}{\sqrt{\smash[b]{N\sigma^2_N}}}
                \sum_{l=2}^k\sum_{j\in V_l}X_j}\Big)
        \Big|\\
        &\quad\leq 
        Ce^{-\alpha q},
    \end{aligned}\end{equation}
    since the block $V_1$ and a block containing all indices in $V_2\cup\cdots\cup V_k$ are separated by at least $q$ indices. In the second term in Equation \eqref{lemZetatoXieq:expdiff}, the factorised exponent we simply bound by $1$. Repeating this process with all boxes $V_{a}$ for $a\in\{2,\dots,k\}$, we get that
    \begin{equation}\begin{aligned}
        &\Big|\E\Big(e^{\frac{it}{\sqrt{\smash[b]{N\sigma^2_N}}}\sum_{l=1}^k\sum_{j\in V_l}X_j}\Big) 
            - \prod_{l=1}^k\E\Big(e^{\frac{it}{\sqrt{\smash[b]{N\sigma^2_N}}}\sum_{j\in V_1}X_j}\Big)\Big|\\
        &\quad\leq 
        Ce^{-\alpha q} 
        +   \Big|\E\Big(e^{\frac{it}{\sqrt{\smash[b]{N\sigma^2_N}}}
            \sum_{l=2}^k\sum_{j\in V_l}X_j}\Big)
        -\prod_{l=2}^{k}\E\Big(e^{\frac{it}{\sqrt{\smash[b]{N\sigma^2_N}}}\sum_{j\in V_1}X_j}\Big)\Big|\\
        &\quad\leq Cke^{-\alpha q}.
    \end{aligned}\end{equation}
    Letting $c_{4}=C$, we get
    \begin{equation}\begin{aligned}
        \frac{1}{|t|}\Big|\E\Big(e^{\frac{it}{\sqrt{\smash[b]{N\sigma^2_N}}}\sum_{l=1}^k\sum_{j\in V_l}X_j}\Big) 
            - \prod_{l=1}^k\E\Big(e^{\frac{it}{\sqrt{\smash[b]{N\sigma^2_N}}}\sum_{j\in V_1}X_j}\Big)\Big|
        \leq 
        c_{4} \frac{ke^{-\alpha q}}{|t|},
        \label{lemZetatoXieq:first}
    \end{aligned}\end{equation}
    which is one of the two bounds in the statement of the lemma, Equation \eqref{CLTthmEq:lemZetatoXi}. For the other, we first bound the exponents separately. Note that
    \begin{equation}\begin{aligned}
        \E\Big( \frac{1}{\sqrt{\smash[b]{N\sigma^2_N}}}\sum_{l=1}^k\sum_{j\in V_l}X_j \Big)^2
            &=
        \frac{k}{N\sigma_N^2}\E\Big( \big(\sum_{j_1\in V_1}X_{j_1}\big)\big(\sum_{l=1}^k\sum_{j_2\in V_l}X_{j_2}\big)\Big)
            \\&\leq
        \frac{kp}{N\sigma_N^2} \E\Big(\sum_{j=1}^N X_1X_j\Big)
                        \\&=
        \frac{kp}{N},
        \label{lem4eq:expBound1}
    \end{aligned}\end{equation}
    and
    \begin{equation}\begin{aligned}
        \prod_{l=1}^k\E\Big( \frac{1}{\sqrt{\smash[b]{N\sigma^2_N}}}\sum_{j\in V_l} X_j \Big)^2
            &\leq
        \prod_{l=1}^k\frac{p}{N\sigma_N^2}\E\Big(\sum_{j=1}^p X_1X_j\Big)\\
            &\leq
        \prod_{l=1}^k\frac{kp}{N}\\
            &\leq
        \frac{kp}{N}.
        \label{lem4eq:expBound2}
    \end{aligned}\end{equation}
    Equations \eqref{lem4eq:expBound1} and \eqref{lem4eq:expBound2} give us
    \begin{equation}\begin{aligned}
        \E\Big| e^{\frac{it}{\sqrt{\smash[b]{N\sigma^2_N}}} \sum_{l=1}^k\sum_{j\in V_l}X_j}
         - e^{\frac{it}{\sqrt{\smash[b]{N\sigma^2_N}}} \sum_{j\in V_1}X_j} \Big|
    &\leq
        \E\Big| 
            \frac{t}{\sqrt{\smash[b]{N\sigma^2_N}}} \sum_{l=1}^k\sum_{j\in V_l}X_j
            -k\frac{t}{\sqrt{\smash[b]{N\sigma^2_N}}} \sum_{j\in V_1}X_j
        \Big|\\
    &\leq
        2\sqrt{\frac{kp}{N}}|t|,
    \end{aligned}\end{equation}
    implying that
        \begin{equation}\begin{aligned}
        \frac{1}{t}\Big|\E\Big(e^{\frac{it}{\sqrt{\smash[b]{N\sigma^2_N}}}\sum_{l=1}^k\sum_{j\in V_l}X_j}\Big) 
            - \prod_{l=1}^k\E\Big(e^{\frac{it}{\sqrt{\smash[b]{N\sigma^2_N}}}\sum_{j\in V_1}X_j}\Big)\Big|
        \leq 
        2\sqrt{\frac{kp}{N}},
        \label{lemZetatoXieq:second}
    \end{aligned}\end{equation}
    which is the second bound in Equation \eqref{CLTthmEq:lemZetatoXi}. Thus, the lemma is proved.
    \hfill$\Box$
    
\subsection{Proof of Lemma \ref{lem:XitoXi}}
\label{sec:lem3_XitoXi}
We work with the square of the expression of the statement. Since $\xi_i$ are identically distributed, we can write $\text{Var}(\xi_i) = \text{Var}(\xi_1)$ for all $i$. Independence means the sum can be extracted from the expectation and the expression rewritten as
    \begin{equation}\begin{aligned}
        \E\bigg(\sum_{i=1}^k \Big( \frac{\xi_i}{\sqrt{\smash[b]{N\sigma^2_N}}}
            -\frac{\xi_i}{\sqrt{k\text{Var}(\xi_i)}}\Big)\bigg)^2
        &= \bigg(\frac{1}{\sqrt{\smash[b]{N\sigma^2_N}}}
            -\frac{1}{\sqrt{k\text{Var}(\xi_1)}}\bigg)^2\sum_{i=1}^k \E(\xi_i^2) \\
        &= \bigg(\frac{1}{\sqrt{\smash[b]{N\sigma^2_N}}}
            -\frac{1}{\sqrt{k\text{Var}(\xi_1)}}\bigg)^2 k\text{Var}(\xi_1) \\
        &= \bigg(\sqrt{\frac{k\text{Var}(\xi_1)}{N\sigma_N^2}}-1\bigg)^2 \\
            &\leq \bigg(1-\frac{k\text{Var}(\xi_1)}{N\sigma_N^2}\bigg)^2 \\
            &\leq \bigg(1-\frac{pk}{N}\frac{\text{Var}(\xi_1)}{\sigma_N^2p}\bigg)^2 \\
            &\leq \bigg(\left|1-\frac{pk}{N}\right| +\left| 1-\frac{\text{Var}(\xi_1)}{\sigma_N^2p}\right|\bigg)^2\\
            &\leq \bigg(\frac{qk}{N} + \left| 1-\frac{\text{Var}(\xi_1)}{\sigma_N^2p}\right|\bigg)^2.
            \label{lemXitoXieq:DiffBound}
    \end{aligned}\end{equation}
    Now we bound the term in absolute value in the final bracket, for which we need additional steps compared with \cite{SS_2004_CLTforRFldWExpDecay}. For simplicity, we multiply by $\sigma_N^2 p$. We then insert the definition of $\sigma_N^2$ from Equation \eqref{eq:sigma}, use symmetry to shift the indices, and finally apply the bound of Theorem \ref{thm:clusterDecayCoulomb}. This results in the bound
    \begin{equation}\begin{aligned}
        p\sigma_N^2-\text{Var}(\xi_1)
            &= 
        p\sigma_N^2-\E\Big(\sum_{i=1}^p X_i\Big)^2\\
            &= 
        p\sum_{j=1}^N \E(X_1X_j)
                -\sum_{i,j=1}^p \E(X_iX_j) \\
            &\leq 
        \bigg(p\E(X_1^2)+2p\sum_{j=2}^{\lceil N/2\rceil} \E(X_1X_j)\bigg)
        -\bigg(p\E(X_1^2)+2\sum_{j=2}^{p}(p-j+1) \E(X_1X_{j})\bigg) \\
            &= 
        2p\sum_{j=p+1}^{\lceil N/2\rceil} \E(X_1X_j)
        +2\sum_{j=2}^p(j-1) \E(X_1X_j). 
        \label{eq:lemXiXiEq}
    \end{aligned}\end{equation}
    From the last line of Equation \eqref{eq:lemXiXiEq}, the first term is bound by
    \begin{equation}\begin{aligned} 
        2p\sum_{j=p+1}^{\lceil N/2\rceil} \E(X_1X_j)
            &\leq 
        2pe^{-\alpha p}\sum_{j=1}^{\infty} Ce^{-\alpha j}\\
            &\leq
        2c_{} p e^{-\alpha p},
        \label{lemXitoXieq:term1}
    \end{aligned}\end{equation}
    where 
    \begin{equation}\begin{aligned} 
        c\coloneqq \sum_{j=1}^{\infty} Ce^{-\alpha j}.
    \end{aligned}\end{equation}

    To get the desired bound on the final term in Equation \eqref{eq:lemXiXiEq}, we let $R$ be some index $R\in\{2,\dots,p\}$ and apply the bounds
    \begin{equation}
        \E(X_1X_{j}) \leq
        \begin{cases}
            1,\ &\text{ if } j\in\{2,\dots,R+1\},\\
            Ce^{-\alpha (j-1)}.\ &\text{ if } j\in\{R+2,\dots,p\}.
        \end{cases}
    \end{equation}
    This gives us
    \begin{equation}\begin{aligned}
        2\sum_{j=2}^p(j-1) \E(X_1X_j)
        &\leq 
            2\sum_{j=2}^R (j-1)
             +2\sum_{j=R+1}^{p} (j-1)Ce^{-\alpha (j-1)}\\
        &\leq
            R^2  + 2pe^{-\alpha R}\sum_{j=1}^{\infty} Ce^{-\alpha j}\\
        &\leq
            R^2  + 2c_{} pe^{-\alpha R}.
            \label{lemXitoXieq:term2}
    \end{aligned}\end{equation}
    We now define $R$ to be
    \begin{equation}\begin{aligned}
        R &\coloneqq\left\lfloor p^{(1-\epsilon_5)/2}\right\rfloor.
    \end{aligned}\end{equation}
    This choice together with Equations \eqref{lemXitoXieq:term1} and \eqref{lemXitoXieq:term2} inserted into Equation \eqref{eq:lemXiXiEq} gives
    \begin{equation}\begin{aligned}
        \left|1-\frac{\text{Var}(\xi_1)}{p\sigma_N^2}\right| 
            &\leq 
        \frac{1}{p\sigma_N^2}\Big(
            2c_{} p e^{-\alpha p}
            +R^2
            +2c_{} pe^{-\alpha R}
        \Big)\\
            &\leq
        \frac{1}{\sigma_N^2}\Big(
            4c_{} e^{-\alpha R}
            +p^{-\epsilon_5}\Big)\\
        &\leq 
        \frac{1}{\sigma_N^2}
            (4c_{}+1) p^{-\epsilon_5}\\ 
            &=
        c_{5} p^{-\epsilon_5},
        \label{lemXitoXieq:VarBound}
    \end{aligned}\end{equation}
    where
    \begin{equation}\begin{aligned}
        c_{5} 
            \coloneqq 
        \frac{1}{\sigma_N^2} + \frac{4}{\sigma_N^2}\sum_{j=1}^{\infty} Ce^{-\alpha j}.
    \end{aligned}\end{equation}
    Inserting this into Equation \eqref{lemXitoXieq:DiffBound} and using Jensen's inequality, we finally get
    \begin{equation}\begin{aligned}
            \E\bigg|\sum_{i=1}^k \Big( \frac{\xi_i}{\sqrt{\smash[b]{N\sigma^2_N}}}
                -\frac{\xi_i}{\sqrt{k\text{Var}(\xi_i)}}\Big)\bigg|
        &\leq 
            \frac{qk}{N} + \left| 1-\frac{\text{Var}(\xi_1)}{\sigma_N^2p}\right|\\
        &\leq 
            \frac{qk}{N} + c_{5} p^{-\epsilon_5},
        \end{aligned}\end{equation}
    and the Lemma is proved.
    \hfill$\Box$
    
\subsection{Proof of Lemma \ref{lem:BerryEsseen}}
\label{sec:lem4_BerryEsseen}
    Recall the Berry-Esseen Theorem stated in \cite{Petrov_75} as follows. 
    \begin{theorem*}[\cite{Petrov_75}, Theorem 2].
        Let $(\xi_i)_{i=1}^k$ be $k$ independent, zero mean random variables distributed as $\xi$ with bounded third moment. Then for all $t\in\real$ satisfying
        \begin{equation}\begin{aligned}
            |t|\leq \frac{(k\E(\xi^2))^{3/2}}{4k\E(|\xi|^3)},
            \label{lemB-Eeq:B-Econdition}
        \end{aligned}\end{equation}
        we have that
        \begin{equation}\begin{aligned}
            \left| \E\Big(e^{\frac{it}{\text{Var}(\xi_1)}\sum_{i=1}^k \xi_i}\Big)-e^{-t^2/2}\right| 
                \leq
            16|t|^3e^{-t^2/3}\frac{k\E(|\xi|^3)}{(k\E(\xi^2))^{3/2}}.
            \label{eq:BerryEsseen}
        \end{aligned}\end{equation}
    \end{theorem*}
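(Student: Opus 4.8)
The plan is to apply the Berry--Esseen estimate recalled from \cite{Petrov_75} (Theorem~2) to the collection $\xi_1,\dots,\xi_k$, which by \eqref{eq:xiDistAsSum} is independent and identically distributed, thereby reducing the lemma to two quantitative facts about a single block sum $\xi_1$: a lower bound for its variance and an upper bound for its third moment. First I would verify the hypotheses of that theorem: the $\xi_i$ are independent, identically distributed, centred ($\E\xi_i=\sum_{j\in V_i}\E X_j=0$) and bounded ($|\xi_i|\le|V_i|=p$), hence have finite moments of every order. So for $t$ in the window \eqref{lemB-Eeq:B-Econdition}, the left-hand side of \eqref{CLTthmEq:lemB-E} is at most $16|t|^3e^{-t^2/3}\,k\E|\xi_1|^3/(k\E\xi_1^2)^{3/2}$ by \eqref{eq:BerryEsseen}. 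The denominator is handled at once: since $\xi_1\overset{d}{=}\sum_{j\in V_1}X_j$ and $V_1$ is a block of $p$ consecutive indices, \eqref{lemB-Eeq:Squarebound} gives $\E\xi_1^2=\mathrm{Var}(\xi_1)\ge c_1p$, so $(k\E\xi_1^2)^{3/2}\ge c_1^{3/2}(kp)^{3/2}$.

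The real work is the third-order estimate for $\xi_1$, and this is where Theorem~\ref{thm:clusterDecayCoulomb} enters. I would expand the relevant third moment as a sum over triples of indices inside $V_1$ and organise the terms by how the three indices cluster. Because $\E X_j=0$, the correlation-decay consequence of Theorem~\ref{thm:clusterDecayCoulomb} --- pairing a singleton or a pair against the remaining indices, so that $|\E(X_aX_b)|\le Ce^{-\alpha|a-b|}$ with the triple correlations controlled analogously --- makes every term in which one index is separated from the others exponentially small in the separation; summing the geometric tails collapses the sum onto the $O(p)$ triples confined to a window of bounded width, yielding a bound of the shape $(\text{combinatorial constant})\,p+(\text{combinatorial constant})\,p\sum_{j\ge1}Ce^{-\alpha j}$ for the third cumulant $\E\xi_1^3$ of the block, the two constants being the $7$ and the $18$ appearing in \eqref{lemB-Eeq:constant} and a small polynomial slack $p^{\epsilon_6}$ being spent in passing to the quantity that actually feeds Petrov's comparison (equivalently, in dominating the remainder against the Gaussian). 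With $D=7+18\sum_{j\ge1}Ce^{-\alpha j}$ this bound and $\E\xi_1^2\ge c_1p$ together reduce the ratio in \eqref{eq:BerryEsseen} to at most $D/(c_1^{3/2}\sqrt{kp^{1-\epsilon_6}})$, so that, recalling the definition $c_6=16D/c_1^{3/2}$ from \eqref{lemB-Eeq:constant}, the estimate of \eqref{CLTthmEq:lemB-E} follows; moreover the same two bounds show that Petrov's admissibility window \eqref{lemB-Eeq:B-Econdition} contains $\{|t|\le4\sqrt{kp^{1-\epsilon_6}}/c_6\}$, which by hypothesis \eqref{lemB-Eeq:Tbound} contains $[-T,T]$. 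This last point is a built-in consistency check: the condition \eqref{lemB-Eeq:Tbound} is calibrated so that $[-T,T]$ is precisely the range on which the comparison is certified.

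The step I expect to be the genuine obstacle is the third-order estimate, and specifically making the $p$-dependence small enough to produce the $\sqrt{kp^{1-\epsilon_6}}$ in the denominator. Bounding the third \emph{absolute} moment $\E|\xi_1|^3$ naively --- via $\|\xi_1\|_4^3$ or via $p\,\E\xi_1^2$ --- gives only $\E|\xi_1|^3=O(p^{3/2})$; in fact $\E|\xi_1|^3\ge(\E\xi_1^2)^{3/2}\ge c_1^{3/2}p^{3/2}$ always, so the absolute third moment cannot beat $p^{3/2}$, and feeding that into \eqref{eq:BerryEsseen} caps the denominator at $\sqrt k$ and the final rate at $N^{-\epsilon}$ rather than $N^{-1/4+\epsilon}$. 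Getting down to $\sqrt{kp^{1-\epsilon_6}}$ therefore requires exploiting the cancellation from $\E X_j=0$ together with the exponential mixing of Theorem~\ref{thm:clusterDecayCoulomb} --- which is exactly what brings the relevant third-order quantity, the third \emph{cumulant} of $\xi_1$, down to order $p$ --- and it requires a correspondingly sharper characteristic-function comparison, one in which the leading discrepancy after $e^{-t^2/2}$ is carried by that cumulant while the remainder is of strictly smaller order in $p$. Performing that expansion over the $p$ indices of $V_1$, keeping the dependence on $C$, $\alpha$ and $c_1$ explicit so as to recover exactly the constant $c_6$ of \eqref{lemB-Eeq:constant}, and disposing of the $p^{\epsilon_6}$ slack in the remainder, is the technical heart of the argument; everything else is the bookkeeping and the elementary arithmetic above.
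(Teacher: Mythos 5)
The statement under review is Petrov's Theorem 2 itself --- an external, classical Berry--Esseen-type characteristic-function inequality which the paper does not prove but merely recalls and cites before using it in the proof of Lemma \ref{lem:BerryEsseen}. Your proposal never proves this statement: its opening move is to ``apply the Berry--Esseen estimate recalled from \cite{Petrov_75}'', i.e.\ you assume exactly the result you were asked to establish, and the rest of your text is devoted to the downstream Lemma \ref{lem:BerryEsseen}. That is a genuine gap (indeed a circularity with respect to the stated target). A proof of the quoted theorem would be the standard Esseen argument: write the characteristic function of the normalised i.i.d.\ sum as the $k$-th power of $\varphi(t/\sqrt{k\E(\xi^2)})$ with $\varphi(s)=\E(e^{is\xi})$, use $\E(\xi)=0$ and a third-order Taylor expansion to get $|\varphi(s)-1+\tfrac{s^2}{2}\E(\xi^2)|\leq \tfrac{|s|^3}{6}\E(|\xi|^3)$, deduce $|\varphi(s)|\leq e^{-s^2\E(\xi^2)/3}$ on the range \eqref{lemB-Eeq:B-Econdition}, and compare with $e^{-t^2/(2k)}$ factor by factor via $|z^k-w^k|\leq k\max(|z|,|w|)^{k-1}|z-w|$; none of this appears in your write-up.

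Even read as a (mis-targeted) blind proof of Lemma \ref{lem:BerryEsseen}, the proposal is a sketch whose decisive step --- the third-order estimate that produces the $\sqrt{\smash[b]{kp^{1-\epsilon_6}}}$ in \eqref{CLTthmEq:lemB-E} --- is explicitly deferred as ``the technical heart'' and never carried out. Your Lyapunov observation, $\E(|\xi|^3)\geq(\E(\xi^2))^{3/2}\geq (c_1p)^{3/2}$ by \eqref{lemB-Eeq:Squarebound}, is correct and is a substantive remark: it is in tension with the paper's own bound $\E(|\xi|^3)\leq(7+18c)p^{1+\epsilon_6/2}$, whose derivation in \eqref{lemB-Eeq:term2} applies the correlation-decay theorem to the non-centred quantities $X_1^2$ and $|X_{1+r}|$, whose product of means does not vanish. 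But your proposed remedy --- replacing the absolute third moment by the third cumulant of $\xi_1$ inside a ``sharper characteristic-function comparison'' --- is not something the quoted Petrov inequality \eqref{eq:BerryEsseen} supplies, since that inequality is stated in terms of $\E(|\xi|^3)$; so the plan, as written, cannot be completed with the tools you invoke and would require proving a different expansion from scratch, which you do not do, nor do you show how the specific constant $c_6$ of \eqref{lemB-Eeq:constant} or the admissibility of the whole range $|t|\leq T$ would then be recovered.
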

    We will in what follows derive the bound
    \begin{equation}\begin{aligned}
        16\frac{k\E(|\xi|^3)}{(k\E(\xi^2))^{3/2}}
        \leq \frac{c_{6}}{\sqrt{kp^{1-\epsilon_6}}}.
        \label{lemB-Eeq:boundToShow}
    \end{aligned}\end{equation}
    If this indeed holds, then the bound for $T$ stated in the lemma means that also Equation \eqref{lemB-Eeq:B-Econdition} is satisfied for all $|t|\leq T$ since
    \begin{equation}\begin{aligned}
        T &\leq \frac{4\sqrt{kp^{1-\epsilon_6}}}{c_{6}}\\
        &\leq \frac{(k\E(\xi^2))^{3/2}}{4k\E(|\xi|^3)}.
    \end{aligned}\end{equation}
    Further, with the claimed bound in Equation \eqref{lemB-Eeq:boundToShow}, Equation \eqref{eq:BerryEsseen} becomes the lemma statement, Equation \eqref{CLTthmEq:lemB-E}, and the proof would be complete. 
    
    To prove that the bound in Equation \eqref{lemB-Eeq:boundToShow} holds, we first derive an upper bound for the third absolute moment of $\xi$. By collecting terms and shifting indices, we get the equality
    \begin{equation}\begin{aligned}
        \E(|\xi|^3) &= \E\Big(\Big|\sum_{j=1}^p X_j\Big|^3\Big)\\
        &= 
            p\E(|X_1|^3) 
            + 6\sum_{r=1}^{p-1}(p-r)\E(X_1^2|X_{1+r}|)\\
            &\quad
            + 6\sum_{r_1=1}^{p-2}\sum_{r_2=1}^{p-r_1-1} (p-r_1-r_2)\E(|X_1 X_{1+r_1} X_{1+r_1+r_2}|).
    \label{lemB-Eeq:CubeMomentBound}
    \end{aligned}\end{equation}
    The first term in Equation \eqref{lemB-Eeq:CubeMomentBound} is simply bound by 
    \begin{equation}\begin{aligned}
       p\E(|X_1|^3) 
        \leq 
       p\E(|X_1|) 
        \leq p.
        \label{lemB-Eeq:term1}
    \end{aligned}\end{equation}
    For the second term in Equation \eqref{lemB-Eeq:CubeMomentBound} we use Theorem \ref{thm:clusterDecayCoulomb}, which gives constants $C,\alpha\in\real^+$ and the bound
    \begin{equation}\begin{aligned} 
        \sum_{r=1}^{p-1}(p-r)\E(X_1^2|X_{1+r}|)
            &\leq
        p\sum_{r=1}^{\infty}Ce^{-\alpha r}
            = c_{} p,
        \label{lemB-Eeq:term2}
    \end{aligned}\end{equation}
    where 
    \begin{equation}\begin{aligned} 
        c_{} \coloneqq \sum_{r=1}^{\infty}Ce^{-\alpha r}.
        \label{CLTeq:cSumDef}
    \end{aligned}\end{equation}
    For the third term in Equation \eqref{lemB-Eeq:CubeMomentBound}, we introduce a cut-off at an index $R\in\{1,\dots,p\}$, soon defined as a function of $p$, and bound the terms in the sums by $1$ when $r_1,r_2\leq R$ and by Theorem \ref{thm:clusterDecayCoulomb} otherwise. We get
    \begin{equation}\begin{aligned}
            \sum_{r_1=1}^{p-2}&\sum_{r_2=1}^{p-r_1-1} (p-r_1-r_2)\E(|X_1 X_{1+r_1} X_{1+r_1+r_2}|)\\
        &\leq
            \sum_{r_1=1}^{R}
            \Big(
                \sum_{r_2=1}^{\min\{R,p-r_1-1\}} 
                    (p-r_1-r_2)
                +\sum_{r_2=R+1}^{p-r_1-1} 
                    (p-R)Ce^{-\alpha r_2}
            \Big)\\  
          &\quad 
            +\sum_{r_1=R+1}^{p-2}
                \sum_{r_2=1}^{p-r_1-1} 
                    (p-R)Ce^{-\alpha r_1}\\
        &\leq
            R^2(p-R-1)
                +R(p-R)e^{-\alpha R}\sum_{r_2=1}^{\infty} 
                    Ce^{-\alpha r_2}\\  
          &\quad 
            +(p-R)^2e^{-\alpha R} \sum_{r_1=1}^{\infty} 
                    Ce^{-\alpha r_1}\\
        &\leq
            R^2 p
            + 2c_{} p^2e^{-\alpha R},
        \label{lemB-Eeq:term3intermediary}
    \end{aligned}\end{equation}
    where $c$ is defined as above, Equation \eqref{CLTeq:cSumDef}. Now, letting
    \begin{equation}\begin{aligned}
        R &\coloneqq \left\lfloor p^{\epsilon_6/4}\right\rfloor,
    \end{aligned}\end{equation}
    then Equation \eqref{lemB-Eeq:term3intermediary} becomes
    \begin{equation}\begin{aligned}
        \sum_{r_1=1}^{p-2}\sum_{r_2=1}^{p-r_1-1} (p-r_1-r_2)\E(|X_1 X_{1+r_1} X_{1+r_1+r_2}|)
        &\leq
            (1+2c_{})p^{1+\epsilon_6/2},
        \label{lemB-Eeq:term3}
    \end{aligned}\end{equation}    
    for large $p$. Inserting the bounds from Equations \eqref{lemB-Eeq:term1}, \eqref{lemB-Eeq:term2} and \eqref{lemB-Eeq:term3} into Equation \eqref{lemB-Eeq:CubeMomentBound}, we get the bound
    \begin{equation}\begin{aligned}
        \E|\xi|^3 &\leq
            p
            + 6c_{} p
            + 6(1+2c_{})p^{1+\epsilon_6/2}\\
        &\leq
            (7+18c_{}) p^{1+\epsilon_6/2}.
    \end{aligned}\end{equation}
    This together with the lower bound on $\E(\xi^2)$ from Equation \eqref{lemB-Eeq:Squarebound} gives us that
    \begin{equation}\begin{aligned}
        \frac{16k\E(|\xi|^3)}{(k\E(\xi^2))^{3/2}}
            &\leq
        \frac{16(7+18c_{})kp^{1+\epsilon_6/2}}
        {(c_{1} kp)^{3/2}}\\
            &=
        \frac{c_{6}}{ \sqrt{k p^{1-\epsilon_6}}},
        \label{lemB-Eeq:Tneed}
    \end{aligned}\end{equation}
    where $c_{6}$ is defined as in the lemma statement, Equation \eqref{lemB-Eeq:constant}. Thus Equation \eqref{eq:BerryEsseen} is true and the Lemma is proved.    
    \vspace{-14pt}\hfill$\Box$

\section{Acknowledgement}
I would like to thank my supervisor Tatyana Turova for our many interesting discussions and her valuable insights and encouragements.

\begin{flushleft}

\end{flushleft}

\end{document}